\newtheorem*{conj*}{Conjecture}
\newtheorem*{thm*}{Theorem}
\newtheorem{prop}{Proposition}[section]
\newtheorem{LM}{Lemma}[section]
\newtheorem{thm}{Theorem}[section]
\newtheorem{cor}{Corollary}[section]
\newtheoremstyle{pourlesremarques}{\topsep}{\topsep}{\normalfont}{}{\bfseries}{.}{ }{}
\theoremstyle{pourlesremarques}
\newtheorem*{rem*}{Remark}
\newtheoremstyle{pourlesexemples}{\topsep}{\topsep}{\normalfont}{}{\bfseries}{.}{ }{}
\theoremstyle{pourlesexemples}
\renewcommand{\o}{\mathfrak{O}}
\renewcommand{\l}{\lambda}
\newcommand{\C}{\mathbb{C}}
\newcommand{\Z}{\mathbb{Z}}
\newcommand{\1}{\mathbf{1}}
\newcommand{\sm}{\mathcal{C}^\infty}
\newcommand{\D}{\Delta}
\title {\textbf{Linear and Shalika local periods for the mirabolic group, and some consequences}}
\author{Nadir MATRINGE\footnote{Nadir Matringe, Universit\'e de Poitiers, Laboratoire de Math\'ematiques et Applications,
T\'el\'eport 2 - BP 30179, Boulevard Marie et Pierre Curie, 86962, Futuroscope Chasseneuil Cedex. Email: Nadir.Matringe@math.univ-poitiers.fr}}
\begin{document}
\maketitle

\begin{abstract}
Using linear periods on the mirabolic subgroup of $GL(n,F)$, for $F$ a non archimedean local field, we give a list of the maximal Levi subgroups of $GL(n,F)$ which can distinguish discrete series and generic representations. We also obtain 
the functional equation of the local exterior-square $L$-function of a generic representation of $GL(n,F)$ when $n=2m$ is even. Then we discuss the relation between Shalika models and local models for representations of $GL(2m,F)$. Finally we give a necessary and sufficient condition on the cuspidal representation $\rho$ for a generalised Steinberg representation $St_k(\rho)$ of $GL(2m,F)$ to have a local model.
\end{abstract}

\section{Introduction}

Let $F$ be a nonarchimedean local field. We use the results of \cite{M}, which allow us to understand the distinction of representations of the mirabolic subgroup $P_n$ of $G_n=GL(n,F)$, with respect to a Levi of 
$G_n$ intersected with $P_n$, to study various problems.\\

Let $G$ be a locally compact totally disconnected group, and $H$ a closed 
subgroup of $G$. When we say a representation of $G$, we mean a complex representation, which is smooth (i.e., such that every vector in the space of the representation, is fixed by an open subgroup of $G$). We say that a representation $(\pi,V)$ of $G$ is $H$-distinguished if there is a nonzero linear form on $V$, which is fixed under $H$. A particularly interesting situation is when $H$ is a subgroup of $G$ fixed by an involution. Distinguished representations of reductive $p$-adic groups are of course of importance for the harmonic analysis of the homogeneous space $G/H$, but they are also related (in the Langlands program) with arithmetical problems, for example the occurrence of singularities of local $L$-functions associated with irreducible representations of $G$.\\

 In this paper, we study the pair $G_n$, $H=L$, with $L$ a maximal Levi subgroup of $G_n$. We first study the 
$L$-distinguished representations $\D$ which embed in the regular representation $L^2(G/Z)$, where $Z$ 
is the center of $G$ (the so called discrete series representations). We actually show that no such representations exist unless $n$ is even, and $L\simeq G_{n/2}\times G_{n/2}$ (Theorem \ref{goodlevidiscr}).
Then we study a more general (and very important) class of representations of $G_n$, the so-called generic representations, and show in this case that if a generic representation of $G_n$ is $L$-distinguished, then 
$L\simeq G_{n/2}\times G_{n/2}$ when $n$ is even, and $L\simeq G_{[n/2]+1}\times G_{[n/2]}$ when $n$ is odd ((Theorem \ref{goodlevigen}). Another application of the results of \cite{M} which is given here is the local functional equation of the exterior-square $L$-function of a generic representation $\pi$ of $G_n$, for $n$ even. This result has been obtained by a global argument for generic representations $\pi$ occuring as local components of cuspidal automorphic representations in \cite{KR}. Here the proof is local and works with no restriction on the generic representation $\pi$. It is a result of \cite{JR} that when $n$ is even, irreducible representations of $G_n$ admitting a Shalika functional, are $G_{n/2}\times G_{n/2}$-distinguished. A converse is obtained in \cite{JNQ} for cuspidal representations, and actually extends (Theorem \ref{local-shalika}, which is a theorem from Sakellaridis and Venkatesh, called the unfolding principle) to a larger class of representations, namely the relatively (square)-integrable representations of  $G_n$ with respect to $G_{n/2}\times G_{n/2}$. Now the existence of a Shalika model for a discrete series $\D$ of a representation is equivalent to the occurrence of a pole at zero in the exterior-square $L$-function of $\D$ (Proposition \ref{poledist}), defined by Jacquet and Shalika. But this $L$-function is known to be equal to the exterior-square $L$-function of the Langlands parameter of $\D$ according to the main result of \cite{KR}. Writing $\D$ as a generalised Steinberg representation $St_k(\rho)$, with $\rho$ a cuspidal representation of 
a smaller general linear group, this allows us to give necessary and sufficient conditions on $\rho$ 
for $\D$ to have a Shalika, or equivalently a local model (Theorem \ref{shaldiscr}).\\

Finally, some results of this paper were stated without proofs in \cite{CP}. This preprint 
was obviously a source of inspiration for the following work.

\section{Preliminaries}

We start with the groups at stake in the paper. Let $F$ be a nonarchimedean local field, with normalised absolute value $|.|$. We denote $GL(n,F)$ by $G_n$ for $n\geq 1$, we will denote $|det(g)|$ by $|g|$ for a matrix in $G_n$. The group $N_n$ will be the unipotent radical of the standard Borel subgroup $B_n$ of $G_n$ given by upper triangular matrices. 
For $n\geq2$ we denote by $U_n$ the group of matrices $u(x)=\begin{pmatrix} 
                                                                                         I_{n-1}    & x\\
                                                                                                    & 1 \end{pmatrix}$ 
                                                                                                    for $x$ in $F^{n-1}$.\\

For $n> 1$, the map $g\mapsto \begin{pmatrix} g & \\ & 1 \end{pmatrix}$ is an embedding of the group $G_{n-1}$ in $G_{n}$, we denote by $P_n$ the subgroup $G_{n-1}U_n$ of $G_n$. We fix a nontrivial character $\theta$ of $(F,+)$, and denote by $\theta$ again the character $n\mapsto \theta(\sum_{i=1}^{n-1}n_{i,i+1})$ of $N_n$.
The normaliser of $\theta_{|U_n}$ in $G_{n-1}$ is then $P_{n-1}$. Suppose $n=p+q$, with $p\geq q\geq 1$, we denote by $M_{(p,q)}$ the standard Levi of $G_n$ given by matrices 
$\begin{pmatrix} h_p &  \\  & h_q  \end{pmatrix}$ with $h_p\in G_p$ and $h_q\in G_q$, and by 
$M_{(p,q-1)}$ the standard Levi of $G_{n-1}$ given by matrices 
$\begin{pmatrix} h_p &  \\  & h_{q-1}  \end{pmatrix}$ with $h_p\in G_p$ and $h_{q-1}\in G_{q-1}$. 
We denote by $M_{(p-1,q-1)}$ the standard Levi of $G_{n-2}$ given by matrices 
$\begin{pmatrix} h_{p-1} &  \\  & h_{q-1}  \end{pmatrix}$ with $h_{p-1}\in G_{p-1}$ and $h_{q-1}\in G_{q-1}$. Let $w_{p,q}$ be the permutation matrix of $G_n$ corresponding to the permutation 
$$\left(\begin{array}{llllllllllllllll} 
1 & \!\! \dots \!\! & p-q & p-q +1 & p-q +2 &\!\! \dots \!\! & p-1    & p      & p+1      &\!\! \dots \!\!  & p+q-2 & p+q-1 & p+q \\
1 & \!\! \dots \!\! & p-q & p-q +1 & p-q +3 &\!\!  \dots \!\!& p+q-3 & p+q-1 & p-q + 2 &\!\! \dots \!\!& p+q-4 & p+q-2 & p+q
\end{array}\right)$$
Let $w_{p,q-1}$ be the permutation matrix of $G_{n-1}$ corresponding to the permutation $w_{p,q}$ restricted to $\{1,\dots,n-1\}$:
$$\left(\begin{array}{lllllllllllllll} 
1 & \!\! \dots \!\! & p-q & p-q +1 & p-q +2 &\!\! \dots \!\! & p-1    & p      & p+1      &\!\! \dots \!\!  & p+q-2 & p+q-1  \\
1 & \!\! \dots \!\! & p-q & p-q +1 & p-q +3 &\!\!  \dots \!\!& p+q-3 & p+q-1 & p-q + 2 &\!\! \dots \!\!& p+q-4 & p+q-2 
\end{array}\right)$$ 
Let $w_{p-1,q-1}$ be the permutation matrix of $G_{n-2}$ corresponding to the permutation
$$\left(\begin{array}{lllllllllllllll} 
1 & \!\! \dots \!\! & p-q & p-q+1  & p-q +2 &\!\! \dots \!\! & p-2   & p-1      & p        &\!\! \dots \!\!&  p+q-3 & p+q-2   \\
1 & \!\! \dots \!\! & p-q & p-q+1  & p-q +3 &\!\!  \dots \!\!& p+q-5 & p+q-3   & p-q + 2  &\!\! \dots \!\!&  p+q-4 & p+q-2 
\end{array}\right)$$ 

We denote by $H_{p,q}$ the subgroup $w_{p,q}M_{(p,q)}w_{p,q}^{-1}$ of $G_n$, by $H_{p,q-1}$ the subgroup $w_{p,q-1}M_{(p,q-1)}w_{p,q-1}^{-1}$ of $G_{n-1}$, and by $H_{p-1,q-1}$ the subgroup $w_{p-1,q-1}M_{(p-1,q-1)}w_{p-1,q-1}^{-1}$ of $G_{n-2}$.\\
Considering $G_{n-1}$ (resp. $G_{n-2}$) as a subgroup of $G_n$ (resp. $G_{n-1}$) given by matrices of the form 
$\begin{pmatrix} g & \\ & 1 \end{pmatrix}$, one has $H_{p,q}\cap G_{n-1}=H_{p,q-1}$, and 
$H_{p,q-1}\cap G_{n-2}=H_{p-1,q-1}$.\\

Now we recall basic definitions concerning distinguished representations of $l$-groups. When $G$ is an $l$-group (locally compact totally disconnected group), and we denote by $Alg(G)$ the category of smooth complex $G$-modules. If $(\pi,V)$ belongs to $Alg(G)$, $H$ is a closed subgroup of $G$,
 and $\chi$ is a character of $H$, 
we denote by $\delta_H$ the positive character of $N_G(H)$ such that if $\mu$ is a right Haar measure on $H$, and $int$ is the action of $N_G(H)$ given by $(int(n)f)(h)=f(n^{-1}hn)$ on smooth functions $f$ with compact support on $H$, then $\mu \circ int(n)= \delta_H(n)\mu $ for $n$ in $N_G(H)$.\\ 
If $H$ is a closed subgroup of an $l$-group $G$, and $(\rho,W)$ belongs to $Alg(H)$, we define the object 
$(ind_H^G(\rho), V_c=ind_H^G(W))$ and $(Ind_H^G(\rho), V=Ind_H^G(W))$ of $Alg(G)$ 
as follows.
 The space $V$ is the space of smooth functions from $G$ to $W$, fixed under right translation by the elements of a compact open subgroup 
$U_f$ of $G$, satisfying $f(hg)=\rho(h)f(g)$ for all $h$ in $H$ and $g$ in $G$. 
The space $V_c$ is the subspace of $V$, consisting of functions with support compact mod $H$. The action of $G$ is by right translation on the functions. 
We denote by $\mathbf{1}_H$ the trivial character of $H$ (which we simply write $\mathbf{1}$ if $H=G_0$ is the trivial group). We say that a representation $\pi$ of $G$ is $H$-distinguished if the space $Hom_{H}(\pi,\1_H)$ is nonzero. More generally, if $\chi$ is a character of $H$, 
we say that $\pi$ is $(H,\chi)$-distinguished if $Hom_{H}(\pi,\chi)$ is nonzero.\\

 We will need the work of Bernstein and Zelevinsky concerning the classification of irreducible representations of $G_n$. We first define the following functors following \cite{BZ}:

\begin{itemize}

\item The functor $\Phi^{+}$ from $Alg(P_{k-1})$ to $Alg(P_{k})$ such that, for $\pi$ in $Alg(P_{k-1})$, one has
$\Phi^{+} \pi = ind_{P_{k-1}U_k}^{P_k}(\delta_{U_k}^{1/2}\pi \otimes \theta)$.

\item The functor $\Psi^{+}$ from $Alg(G_{k-1})$ to $Alg(P_{k})$, such that for $\pi$ in $Alg(G_{k-1})$, one has
$\Psi^{+} \pi = ind_{G_{k-1}U_k}^{P_k}(\delta_{U_k}^{1/2}\pi \otimes 1)=\delta_{U_k}^{1/2}\pi \otimes 1 $.

\end{itemize}

A discrete series representation $\D$ of $G_n$ is an irreducible representation such that $\chi \otimes\D$ has a coefficient which is square-integrable mod the center $Z_n$ of $G_n$, for some character $\chi$ of $F^*$. We recall (\cite{Z}, theorem 9.3) that if $\D$ is a discrete series of $G_n$, there is a unique couple $(k,r)$ of positive integers such that $kr=n$, and a unique (up to isomorphism) cuspidal representation $\rho$ of $G_r$, such that $\D$ is the 
only irreducible quotient of the representation $|.|^{(1-k)/2}\rho\times \dots \times|.|^{(k-1)/2}\rho$ of $G_n$, obtained as the normalised parabolically induced representation of the representation $|.|^{(1-k)/2}\rho\otimes \dots \otimes|.|^{(k-1)/2}\rho$ of 
the standard Levi of $G_n$ isomorphic to $(G_r)^k$. We then denote $\D=St_k(\rho)$. If convenient, we will sometimes denote $[\rho,\dots,|.|^k\rho]=|.|^{k/2}St_{k+1}(\rho)$. By definition, if $\D=[\rho,\dots,|.|^k\rho]$ and $l$ is an integer in $\{0,\dots,n\}$, we denote by $\D^{(l)}$ the representation of $G_{n-l}$ which is zero if $l$ is not a multiple of $r$, and which is $[|.|^a\rho,\dots,|.|^k\rho]$ if $l=ar$ ($\D^{(n)}=\1$).\\ 

A generic representation $\pi$ of $G_n$ is by definition an irreducible representation which admits an embedding in the compactly induced representation $ind_{N_n}^{G_n}(\theta)$, in which case it is known that the embedding is unique up to scaling by elements of $\C^*$. One denotes by $W(\pi,\theta)$ the image of this 
embedding and calls it the Whittaker model of $\pi$. Theorem 9.7 of \cite{Z} then asserts that a generic representation $\pi$ of $G_n$ is of the form $\D_1\times\dots\times \D_t$, where 
the $\D_i$'s are unique unlinked (see \cite{Z} for the definition) discrete series of $G_{n_i}$, with $\sum_i n_i=n$. We state the following result, which is a consequence of Subsection 3.5 and Lemma 4.5 of \cite{BZ}, and Proposition 9.6 of \cite{Z}:

\begin{prop}\label{filtr}
Let $\rho$ be a cuspidal representation of $G_r$, and $n=kr$ for $k\geq 1$, then the restriction $(St_k(\rho)_{|P_n},V)$ has a filtration $\{0\}=V_0\subset V_1\subset \dots \subset V_{k-1} \subset V_k=V$ with 
$V_{k-i+1}/V_{k-i}\simeq (\Phi^+)^{ir-1} \Psi^+(|.|^{i/2}St_{k-i}(\rho))$ for $i$ in $\{1,\dots,k\}$, and with the convention that $St_{0}(\rho)$ is $\mathbf{1}$. Let $\pi=\D_1\times\dots\times\D_t$ be a representation of $G_n$ which is a product of discrete series, then its restriction to $P_n$ has a filtration, 
in which each factor is of the form $(\Phi^+)^{n-k-1} \Psi^+(\tau)$, for $k\leq n-1$, and 
$\tau$ is a representation  of $G_k$ of the form $\D_1^{(n_1)}\times\dots\times\D_t^{(n_t)}$ for integers $n_i$ which are $\geq 0$, moreover $(\Phi^+)^{n-1} \Psi^+(\1)$ appears with multiplicity exactly $1$.
\end{prop}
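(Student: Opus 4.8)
The backbone of the argument is the Bernstein--Zelevinsky filtration of the restriction to $P_n$ recorded in Subsection 3.5 of \cite{BZ}: for any $(\pi,V)$ in $Alg(G_n)$, the restricted module $\pi_{|P_n}$ carries a canonical filtration whose $l$-th successive quotient is $(\Phi^+)^{l-1}\Psi^+(\pi^{(l)})$, where $\pi^{(l)}$ denotes the $l$-th derivative, a representation of $G_{n-l}$, for $l$ ranging in $\{1,\dots,n\}$. The plan is to take this filtration as given and to feed in the relevant derivative computations in each of the two cases.

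For the first assertion I would compute the derivatives of $St_k(\rho)$. Because $\rho$ is cuspidal, its only nonzero derivatives are $\rho^{(0)}=\rho$ and the highest one $\rho^{(r)}=\1$, every intermediate derivative vanishing. Inserting this into the derivative of a segment representation (Proposition 9.6 of \cite{Z}, matching the combinatorial recipe for $\D^{(l)}$ fixed before the statement) shows that $(St_k(\rho))^{(l)}=0$ unless $r\mid l$, and that $(St_k(\rho))^{(ir)}\simeq |.|^{i/2}St_{k-i}(\rho)$. Only the layers with $l=ir$, $i\in\{1,\dots,k\}$, then survive in the filtration above, and reindexing them as $V_{k-i+1}/V_{k-i}$ produces exactly the asserted quotients $(\Phi^+)^{ir-1}\Psi^+(|.|^{i/2}St_{k-i}(\rho))$.

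For the second assertion I would combine the same filtration with the Leibniz rule for the derivatives of a parabolically induced representation (Lemma 4.5 of \cite{BZ}): the derivative $(\D_1\times\dots\times\D_t)^{(l)}$ admits a filtration whose quotients are the products $\D_1^{(n_1)}\times\dots\times\D_t^{(n_t)}$ with $n_1+\dots+n_t=l$. Since $\Phi^+$ and $\Psi^+$ are exact (being built from compact induction and from twisting, both exact), applying $(\Phi^+)^{l-1}\Psi^+$ refines the $l$-th layer of the filtration of $\pi_{|P_n}$ into pieces $(\Phi^+)^{l-1}\Psi^+(\D_1^{(n_1)}\times\dots\times\D_t^{(n_t)})$. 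Splicing these refinements together over all $l$ and writing $k=n-l$ yields a filtration of $\pi_{|P_n}$ whose factors all have the stated shape $(\Phi^+)^{n-k-1}\Psi^+(\tau)$ with $\tau=\D_1^{(n_1)}\times\dots\times\D_t^{(n_t)}$ a representation of $G_k$. The factor $(\Phi^+)^{n-1}\Psi^+(\1)$ can arise only from the top layer $l=n$, that is from $\pi^{(n)}$; as each discrete series $\D_i$ is a segment whose highest derivative is $\1$ with multiplicity one, the Leibniz rule forces $\pi^{(n)}=\1$ with multiplicity one, so this factor occurs exactly once.

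The one point demanding care, and the likely main obstacle, is the bookkeeping that splices the two filtrations --- the outer derivative filtration and the inner Leibniz refinement of each $\pi^{(l)}$ --- into a single honest filtration of $\pi_{|P_n}$. This hinges on the exactness of $(\Phi^+)^{l-1}\Psi^+$, which ensures that a short exact sequence of derivatives is carried to a short exact sequence of $P_n$-modules, and on tracking the normalisation twists $|.|^{i/2}$ coming from the $\delta_{U_k}^{1/2}$ factors in $\Psi^+$ and $\Phi^+$; it is precisely this tracking that pins down the shift in $(St_k(\rho))^{(ir)}\simeq |.|^{i/2}St_{k-i}(\rho)$ and the analogous shifts in the general case.
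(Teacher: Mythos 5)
Your proposal is correct and follows exactly the route the paper indicates: the Bernstein--Zelevinsky filtration of the restriction to $P_n$ with layers $(\Phi^+)^{l-1}\Psi^+(\pi^{(l)})$ (Subsection 3.5 of \cite{BZ}), the computation of derivatives of segments from Proposition 9.6 of \cite{Z} giving $(St_k(\rho))^{(ir)}\simeq |.|^{i/2}St_{k-i}(\rho)$, and the Leibniz rule of Lemma 4.5 of \cite{BZ} together with exactness of $\Phi^+$ and $\Psi^+$ for the product case. The paper gives no further detail than these citations, so your write-up is if anything more explicit than the original.
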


We will be concerned with representations of $G_{2m}$ admitting local models, and Shalika models. 
If $P_{(m,m)}$ is the standard parabolic subgroup of $G_{2m}$ corresponding to the partition $(m,m)$, with Levi subgroup $M_{m,m}$, and 
unipotent radical $N_{m,m}$.
Call $S_{2m}$ the subgroup of $P_{(m,m)}$ of matrices of the form $\begin{pmatrix} g & X \\ 0 & g \end{pmatrix}$, isomorphic to a semi-direct
 product $G_m\ltimes N_{m,m}$. 
Any non-trivial character $\theta$ of $F$ defines a character $\Psi$ of $S_{2m}$ by 
$\Theta\begin{pmatrix} g & X \\ 0 & g \end{pmatrix}=\theta(Tr(g^{-1}X))$.
 A representation $(\pi,V)$ of $G_{2m}$ is said to admit a Shalika model if it is $(S_{2m},\Theta)$-distinguished for a
 (equivalently any) non-trivial character $\theta$ of $F$, 
the corresponding invariant linear form being called a Shalika functional. 
A representation $(\pi,V)$ of $G_{2m}$ is said to admit a local (or linear) model if it is $M_{m,m}$-distinguished, 
and the corresponding invariant linear form is called a linear period.\\ 

We now recall some facts about the Jacquet-Shalika exterior-square $L$-function of a generic representation of $G_n$, and the local Langlands correspondance for $G_n$.\\
Let $\pi$ be a generic representation of $G_{2m}$. With $n=2m$, it is shown in \cite{JR} the following integrals converge absolutely for $Re(s)$ greater than a real $r$ depending on $\pi$:
$$J(W,\phi,s)= \int_{N_{n}\backslash G_{n}}\int_{Lie(B_m^{-})}W(w_{m,m} \begin{pmatrix} g & X \\ & g\end{pmatrix}w_{m,m}^{-1})
\theta^{-1}(Tr\ X) dX \phi(\epsilon_m g) |g|^{s} dg$$ for $W$ in $W(\pi,\theta)$, $\phi$ in $\sm_c(F^m)$, and 
$\epsilon_m$ the row vector $(0,\dots,0,1)$ of $F^m$. They define holomorphic functions on this half-plane, having meromorphic continuation as rational functions of $q^{-s}$. Moreover, the vector space of these rational functions is actually a fractional ideal of $\C[q^{-s},q^{s}]$, which is generated by a unique local Euler factor (the inverse of a polynomial $P(q^{-s})$ with $P(0)=1$, see for example Proposition 3.2 of \cite{MY} for this point) which is denoted by $L(\pi,\wedge^2,s)$, and is called the Jacquet-Shalika exterior square $L$-function of $\pi$.\\

Let $W_F$ be the Weil group of $F$ (see \cite{BH}, Chapter 29), and let $W'_F=W_F\times SL(2,\C)$ be the Weil-Deligne group of $F$.
 We recall that according to a theorem by Harris and Taylor/Henniart, there is a natural bijection from the set of isomorphism classes of irreducible representations of $G_n$ to the the set of isomorphism classes of $n$-dimensional semi-simple representations of $W'_F$, called the Langlands 
correspondance, which we denote by $\phi$. 
Let us denote by $Sp(k)$ the (up to isomorphism) unique algebraic $k$-dimensional irreducible representation of $SL(2,\C)$. Then any finite dimensional semi-simple representations of $W'_F$ is a direct sum of representations of the form $\tau \times Sp(k)$, where $\tau$ is an irreducible representation of $W_F$. We take for definition 
of $L(\tau \times Sp(k),s)$ the definition (31.3.31) of \cite{BH}, where $\tau \times Sp(k)$ corresponds to the Weil-Deligne representation of $W_F$. We extend the definition to all semi-simple representations of $W'_F$ by putting $L(\tau\oplus \tau',s)=L(\tau,s)L(\tau',s)$. Then by definition, the exterior-square $L$-function $L(\tau,\wedge^2,s)$ of a finite dimensional 
semi-simple representations $\tau$ of $W'_F$, is $L(\wedge^2(\tau),s)$, where $\wedge^2(\tau)$ is the 
exterior-square of $\tau$.

\section{Levi subgroups of $G_{n}$ with respect to which discrete series and generic representations can be distinguished}

We recall the following proposition which follows from Propositions $2.1$ and $2.2$ of \cite{M} (in which one has injections instead of isomorphisms, but they are actually isomorphisms):

\begin{prop}\label{rec}
Let $\sigma$ belong to $Alg(P_{n-1})$, and $\chi$ be a character of $P_n\cap H_{p,q}$. Then there is a positive character $\chi_{p,q}$ of 
$P_{n-1}\cap H_{p,q-1}$, independent of $\sigma$, such that 
$$Hom_{P_n\cap H_{p,q}}(\Phi^+ \sigma,\chi)\simeq Hom_{P_{n-1}\cap H_{p,q-1}}(\sigma,\chi\chi_{p,q}).$$
Let $\sigma'$ belong to $Alg(P_{n-2})$, and $\chi'$ be a character of $P_{n-1}\cap H_{p,q-1}$. Then there is a positive character $\chi_{p,q-1}$ of 
$P_{n-2}\cap H_{p-1,q-1}$ independent of $\sigma'$, such that $$Hom_{P_{n-1}\cap H_{p,q-1}}(\Phi^+ \sigma',\chi')\simeq
 Hom_{P_{n-2}\cap H_{p-1,q-1}}(\sigma,\chi'\chi_{p,q-1}).$$
\end{prop}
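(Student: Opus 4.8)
The plan is to realise $\Phi^+\sigma$ as a compactly induced representation and compute its restriction to $P_n\cap H_{p,q}$ by Mackey theory. Write $R=P_{n-1}U_n$ and $\tau=\delta_{U_n}^{1/2}\sigma\otimes\theta$, so that $\Phi^+\sigma=ind_R^{P_n}(\tau)$, and set $S=P_n\cap H_{p,q}$. The goal is then to understand $Hom_S(ind_R^{P_n}(\tau),\chi)$, and the analogous space one level lower.

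First I would invoke the geometric-lemma machinery of Bernstein and Zelevinsky for $l$-groups: the restriction $(ind_R^{P_n}\tau)_{|S}$ carries a filtration by $S$-submodules whose subquotients are indexed by the $S$-orbits on $R\backslash P_n$, that is, by the double cosets $R\backslash P_n/S$. The layer attached to a representative $g$ is a compact induction from $S\cap g^{-1}Rg$ of the twisted representation $s\mapsto \tau(gsg^{-1})$, the twist also absorbing the modular characters $\delta_R$ and $\delta_S$ of the two groups. Dualising, the space $Hom_S(ind_R^{P_n}\tau,\chi)$ is assembled, via the long exact sequences attached to this filtration, out of the spaces $Hom_{S\cap g^{-1}Rg}(\tau^g\delta_g,\chi)$ as $g$ runs over the double cosets.

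The heart of the argument is then twofold. On one hand I would show that every double coset other than that of the identity contributes zero: for such a $g$, the intersection $S\cap g^{-1}Rg$ contains a root subgroup $V$ of (a conjugate of) $U_n$ on which the Whittaker-type character $\theta$ built into $\tau$ restricts non-trivially, whereas $\chi$, being a character of a conjugate of a Levi subgroup, restricts trivially on $V$; hence there is no nonzero $V$-equivariant functional and the corresponding $Hom$ space vanishes. This is precisely where the explicit shape of the permutations $w_{p,q}$ and the combinatorics of $R\backslash P_n/S$ must be controlled, and I expect it to be the main obstacle. On the other hand, for the surviving identity coset one has $S\cap R=(P_n\cap H_{p,q})\cap(P_{n-1}U_n)$, which, using the relation $H_{p,q}\cap G_{n-1}=H_{p,q-1}$ together with the semidirect decomposition $P_n=G_{n-1}\ltimes U_n$, reduces to $P_{n-1}\cap H_{p,q-1}$ once the $U_n$-direction has been integrated against $\theta$.

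Finally I would collect the modular contributions. Evaluating $Hom_{S\cap R}(\tau,\chi)$ with $\tau=\delta_{U_n}^{1/2}\sigma\otimes\theta$ by Frobenius reciprocity, the factor $\delta_{U_n}^{1/2}$, the two Mackey modular characters, and the Jacobian of the orbit map combine into a single positive character $\chi_{p,q}$ of $P_{n-1}\cap H_{p,q-1}$, yielding the isomorphism $Hom_S(\Phi^+\sigma,\chi)\simeq Hom_{P_{n-1}\cap H_{p,q-1}}(\sigma,\chi\chi_{p,q})$; since $\chi_{p,q}$ is manufactured purely from modular and geometric data it visibly does not depend on $\sigma$. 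The second isomorphism is then proved by the identical argument applied one step lower, with $(P_{n-1},P_{n-2},H_{p,q-1},H_{p-1,q-1})$ in place of $(P_n,P_{n-1},H_{p,q},H_{p,q-1})$ and using $H_{p,q-1}\cap G_{n-2}=H_{p-1,q-1}$, producing the positive character $\chi_{p,q-1}$.
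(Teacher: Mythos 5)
The paper does not actually prove this proposition: it imports it from Propositions 2.1 and 2.2 of \cite{M}, where the statements are established with injections in place of isomorphisms, and then asserts without further argument that those injections are isomorphisms. Your Mackey-theoretic strategy --- realising $\Phi^+\sigma$ as $ind_{P_{n-1}U_n}^{P_n}(\delta_{U_n}^{1/2}\sigma\otimes\theta)$, filtering its restriction to $S=P_n\cap H_{p,q}$ by the $S$-orbits on $P_{n-1}U_n\backslash P_n\simeq P_{n-1}\backslash G_{n-1}\simeq F^{n-1}\setminus\{0\}$, and discarding all but one orbit because the conjugated character $\theta^g$ is non-trivial on the unipotent part of the stabiliser while $\chi$ and the modular characters are not --- is in substance the method of the cited reference, so you are reconstructing the right kind of proof.

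Two points nonetheless remain genuinely open in your sketch. First, the entire content of the proposition is the combinatorics you defer: one must check that the image of $S$ in $G_{n-1}$, namely $H_{p,q-1}$, has finitely many orbits on the row vectors $F^{n-1}\setminus\{0\}$ (so that the Bernstein--Zelevinsky filtration applies), identify the orbit of the base point $(0,\dots,0,1)$, verify that $\theta$ restricts trivially to $U_n\cap H_{p,q}$ (so that this orbit survives at all), and verify the non-triviality of $\theta^g$ on the unipotent part of the stabiliser for every other orbit; this is exactly where the specific permutation $w_{p,q}$ enters, and none of it is carried out. Second, and more seriously, the isomorphism does not follow from what you establish. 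The functor $Hom(-,\chi)$ is only left exact, so if the surviving layer sits as a subrepresentation in the filtration (which is what happens when the surviving orbit is open, as is typical for these compact-induction filtrations), then vanishing of $Hom$ on the remaining layers yields only an injection $Hom_S(\Phi^+\sigma,\chi)\hookrightarrow Hom_{S\cap R}(\tau\delta,\chi)$ --- which is precisely why \cite{M} states injections rather than isomorphisms. To get surjectivity you need either that the surviving orbit be closed (its layer then being a quotient), or a vanishing of the relevant $Ext^1$, or an explicit extension of the functional from the bottom layer; your appeal to ``the long exact sequences'' silently assumes this, and the paper itself offers no justification either.
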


It has as the following first corollary:

\begin{cor}\label{homzero}
Let $n\geq 3$, and $p$ and $q$ two integers with $p+q=n$ and $p-1\geq q\geq 0$, and $\chi$ be a character of 
$P_n\cap H_{p,q}$, then one has $Hom_{P_n\cap H_{p,q}}((\Phi^+)^{n-1}\Psi^+(\mathbf{1}),\chi)=0$.
\end{cor}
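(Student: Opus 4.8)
The plan is to run an induction on $n$, peeling off the Bernstein--Zelevinsky functors with the two reciprocities of Proposition \ref{rec}, and to reduce everything to a single vanishing statement for the Gelfand--Graev representation of a mirabolic group. As a preliminary I would record the identification $(\Phi^+)^{k-1}\Psi^+(\1)\simeq ind_{N_k}^{P_k}(\theta)$ for every $k\geq 1$, which follows inductively from the definitions of $\Phi^+$ and $\Psi^+$ together with transitivity of induction: $\Psi^+(\1)$ is the trivial representation of $P_1$, and applying $\Phi^+$ to $ind_{N_{k-1}}^{P_{k-1}}(\theta)$ yields $ind_{N_k}^{P_k}(\theta)$ because $N_{k-1}U_k=N_k$ and the character $\theta\otimes\theta$ on $N_{k-1}U_k$ is again the nondegenerate character $\theta$ of $N_k$ (the half-density twist $\delta_{U_k}^{1/2}$ only alters the restriction to the Levi by a positive character and leaves the unipotent part untouched). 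In particular the representation in the statement is $ind_{N_n}^{P_n}(\theta)$.

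For the inductive step I would use both halves of Proposition \ref{rec} in succession. Writing $(\Phi^+)^{n-1}\Psi^+(\1)=\Phi^+\sigma$ with $\sigma=(\Phi^+)^{n-2}\Psi^+(\1)\in Alg(P_{n-1})$, the first reciprocity gives
$$Hom_{P_n\cap H_{p,q}}((\Phi^+)^{n-1}\Psi^+(\1),\chi)\simeq Hom_{P_{n-1}\cap H_{p,q-1}}((\Phi^+)^{n-2}\Psi^+(\1),\chi\chi_{p,q}).$$
When $q\geq 2$ I would then apply the second reciprocity, writing $(\Phi^+)^{n-2}\Psi^+(\1)=\Phi^+\sigma'$ with $\sigma'=(\Phi^+)^{n-3}\Psi^+(\1)\in Alg(P_{n-2})$, to obtain
$$Hom_{P_{n-1}\cap H_{p,q-1}}((\Phi^+)^{n-2}\Psi^+(\1),\chi')\simeq Hom_{P_{n-2}\cap H_{p-1,q-1}}((\Phi^+)^{n-3}\Psi^+(\1),\chi'\chi_{p,q-1}).$$
This lands on a problem of exactly the same shape, with $n$ replaced by $n-2$ and $(p,q)$ replaced by $(p-1,q-1)$; since $(p-1)-1\geq q-1$ is equivalent to the hypothesis $p-1\geq q$, the induction hypothesis applies and the space vanishes. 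The key bookkeeping point, which I would emphasise, is that the difference $p-q\geq 1$ is preserved by each pair of reductions, so the descent never reaches a balanced pair $H_{m,m}$ (the case where distinction does occur) and always terminates in the degenerate situation $q=0$. If instead $q=1$, a single application of the first reciprocity already lands in that base case.

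It therefore remains to treat the terminal case $q=0$, where $H_{p,0}=G_p$ is read as the whole group, so that the relevant subgroup is $P_k$ and one must show $Hom_{P_k}(ind_{N_k}^{P_k}(\theta),\chi)=0$ for a character $\chi$ of $P_k$ with $k\geq 2$ (one checks $k=p-q+1\geq 2$ at the bottom of the descent, and $k=n\geq 3$ when $q=0$ from the outset). I would first absorb $\chi$ through the projection formula, $ind_{N_k}^{P_k}(\theta)\otimes\chi^{-1}\simeq ind_{N_k}^{P_k}(\theta\cdot\chi^{-1}|_{N_k})=ind_{N_k}^{P_k}(\theta)$, using that any character of $P_k$ restricts trivially to $N_k$ for $k\geq 2$ (as $N_k\subset[P_k,P_k]$). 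Then, computing the smooth dual of a compact induction and invoking ordinary Frobenius reciprocity for full induction, $Hom_{P_k}(ind_{N_k}^{P_k}(\theta),\1)$ is identified with $Hom_{N_k}(\1,\theta^{-1}\delta)$ for the relevant modulus character $\delta$; this is zero because on the simple root subgroup $U_{\alpha_1}\subset N_k$ (which exists precisely because $k\geq 2$) the character $\theta$ is nontrivial, whereas $\delta$, being a continuous homomorphism from $(F,+)$ to $\R_{>0}$, must be trivial. I expect this base case --- pinning down that the only possible obstruction is the nondegeneracy of $\theta$ played against the unavoidably trivial restrictions of $\chi$ and of $\delta$ --- to be the main technical point, the surrounding argument being a clean two-step descent.
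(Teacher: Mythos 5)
Your reduction is exactly the paper's: repeated alternating use of the two reciprocities of Proposition \ref{rec} drives the pair $(P_n\cap H_{p,q},\,(\Phi^+)^{n-1}\Psi^+(\1))$ down to $(P_{p-q+1}\cap H_{p-q+1,0},\,(\Phi^+)^{p-q}\Psi^+(\1))=(P_{p-q+1},\,(\Phi^+)^{p-q}\Psi^+(\1))$, the hypothesis $p-1\geq q$ guaranteeing that the terminal index $p-q+1$ is at least $2$; your organisation of this as a two-step induction with terminal cases $q=1$ and $q=0$ is just a restatement of the same count. The only genuine divergence is in how the terminal space is killed. The paper quotes Corollary 3.5 of \cite{BZ}: $(\Phi^+)^{p-q}\Psi^+(\1)$ is an \emph{irreducible} representation of $P_{p-q+1}$, visibly not one-dimensional, hence admits no nonzero map to the irreducible one-dimensional representation $\chi\mu$. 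You instead identify $(\Phi^+)^{k-1}\Psi^+(\1)$ with $ind_{N_k}^{P_k}(\theta)$ (correctly, via transitivity of compact induction and triviality of $\delta_{U_k}^{1/2}$ on unipotents --- this identification is the substance behind the BZ irreducibility statement anyway), absorb $\chi$ using $N_k\subset[P_k,P_k]$, and compute $Hom_{P_k}(ind_{N_k}^{P_k}(\theta),\1)\simeq Hom_{N_k}(\1,\theta^{-1}\delta)=0$ by playing the nondegeneracy of $\theta$ on a simple root subgroup against the triviality of positive characters on unipotent groups. Both endgames are correct; yours is marginally more self-contained, trading the citation of an irreducibility theorem for a direct duality/Frobenius computation, at the cost of a slightly longer write-up. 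I see no gap.
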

\begin{proof}
 Using repeatedly the last two propositions, we get the existence of a positive character $\mu$ of $P_{p-q+1}$ such that 
$Hom_{P_n\cap H_{p,q}}((\Phi^+)^{n-1}\Psi^+(1),\chi)\hookrightarrow Hom_{P_{p-q+1}\cap H_{p-q+1,0}}((\Phi^+)^{p-q}\Psi^+(1),\chi\mu)=
Hom_{P_{p-q+1}}((\Phi^+)^{p-q}\Psi^+(1),\chi\mu)$, and this last space is $0$ 
because $(\Phi^+)^{p-q}\Psi^+(1)$ and $\chi\mu$ are two non isomorphic irreducible representations of $P_{p-q+1}$, 
according to corollary 3.5 of \cite{BZ}.
\end{proof}

Another obvious corollary is the following proposition, which we will use in the next paragraph, to prove the 
functional equation of the exterior square $L$-function when $n$ is even.

\begin{cor}\label{homsteak}
Let $\rho$ be an irreducible representation of $G_{k}$ for $k\leq 2m-1$, and $\chi$ be a character of $H_{m,m}\cap P_{2m}$. Then the representation 
$\pi=(\Phi^+)^{2m-k-1}\Psi^+(\rho)$ is $(H_{m,m}\cap P_{2m},\chi)$-distinguished if and only if $\rho$ is $(H_{k/2,k/2},\chi\mu_{2m}^k)$-distinguished 
when $k=2p$ is even, and $(H_{(k+1)/2,(k-1)/2},\chi\mu_{2m}^k)$-distinguished when $k=2p-1$ is odd, where 
$\mu_{2m}^{2p}=\chi_{m,m}\chi_{m,m-1}\dots \chi_{p+1,p+1}$ and $\mu_{2m}^{2p-1}=\chi_{m,m}\chi_{m,m-1}\dots \chi_{p+1,p}$.
\end{cor}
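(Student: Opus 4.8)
The plan is to evaluate $Hom_{P_{2m}\cap H_{m,m}}(\pi,\chi)$ by stripping off the $2m-k-1$ copies of $\Phi^+$ one at a time with Proposition \ref{rec}, exactly in the spirit of Corollary \ref{homzero}, and then to treat the surviving $\Psi^+(\rho)$ separately. Applying the two parts of Proposition \ref{rec} alternately, each pair of steps sends $Hom_{P_{n}\cap H_{p',q'}}(\cdot,\psi)$ first to $Hom_{P_{n-1}\cap H_{p',q'-1}}(\cdot,\psi\chi_{p',q'})$ and then to $Hom_{P_{n-2}\cap H_{p'-1,q'-1}}(\cdot,\psi\chi_{p',q'}\chi_{p',q'-1})$; that is, it lowers both indices of the Levi by one and multiplies the character by $\chi_{p',q'}\chi_{p',q'-1}$. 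Starting from $(P_{2m}\cap H_{m,m},\chi)$ and iterating until only $\Psi^+(\rho)$ remains, a parity count on $2m-k-1$ shows that for $k=2p$ (an odd number of factors) I arrive at $Hom_{P_{2p+1}\cap H_{p+1,p}}(\Psi^+\rho,\chi\mu_{2m}^{2p})$, the last $\Phi^+$-step lowering the second index, and that for $k=2p-1$ (an even number of factors) I arrive at $Hom_{P_{2p}\cap H_{p,p}}(\Psi^+\rho,\chi\mu_{2m}^{2p-1})$.

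Second, I would confirm that the accumulated character is exactly $\mu_{2m}^k$: multiplying out the successive contributions $\chi_{p',q'}$ and $\chi_{p',q'-1}$ gives $\chi_{m,m}\chi_{m,m-1}\cdots\chi_{p+1,p+1}$ when $k=2p$ and $\chi_{m,m}\cdots\chi_{p+1,p}$ when $k=2p-1$, which are by definition $\mu_{2m}^{2p}$ and $\mu_{2m}^{2p-1}$. This is routine index-matching.

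Finally comes the main point: passing from the distinction of $\Psi^+\rho$ as a representation of $P_{k+1}$ to the distinction of $\rho$ as a representation of the full group $G_k$. I would use that $\Psi^+\rho=\delta_{U_{k+1}}^{1/2}\rho\otimes 1$ is merely $\rho$ inflated along $P_{k+1}\to G_k$, the unipotent radical acting trivially; hence any equivariant functional is trivial on the unipotent part of the ambient $P_{k+1}\cap H$ and descends to the image of $P_{k+1}\cap H$ in $G_k$. The computation underlying Proposition \ref{rec} (Propositions 2.1 and 2.2 of \cite{M}) identifies this image, together with the restricted character, as $H_{p,p}=H_{k/2,k/2}$ twisted by $\chi\mu_{2m}^{2p}$ when $k=2p$, and as $H_{p,p-1}=H_{(k+1)/2,(k-1)/2}$ twisted by $\chi\mu_{2m}^{2p-1}$ when $k=2p-1$. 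Since every step above is an isomorphism of Hom-spaces, $Hom_{P_{2m}\cap H_{m,m}}(\pi,\chi)$ is nonzero if and only if the final space is, which is exactly the asserted equivalence.

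I expect the genuine obstacle to be this last identification of the Levi subgroup. The $\Psi^+$-transition lowers the block of $M_{(\cdot,\cdot)}$ containing the largest index, and this is the first block for $k=2p$ but the second block for $k=2p-1$; getting this parity dependence right is what produces $H_{p,p}$ rather than $H_{p+1,p-1}$ in the even case, and it rests on the explicit form of the permutations $w_{p',q'}$ from the Preliminaries rather than on Proposition \ref{rec} alone.
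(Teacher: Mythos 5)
Your argument is correct and is exactly the one the paper intends: the paper gives no proof, presenting the statement as an ``obvious corollary'' of Proposition \ref{rec}, and your plan --- strip the $2m-k-1$ factors $\Phi^+$ by alternating the two parts of that proposition, track the accumulated characters $\chi_{p',q'}$, and then unfold $\Psi^+\rho$ through the projection $P_{k+1}\to G_k$ --- is the intended way to fill it in. In particular you correctly isolate and resolve the only non-automatic point, namely that after the descent the largest index sits in the $A$-block of $H_{p+1,p}$ when $k=2p$ but in the $B$-block of $H_{p,p}$ when $k=2p-1$, which is why the $\Psi^+$ step produces $H_{p,p}$ in the even case and $H_{p,p-1}$ in the odd case.
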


Now we prove the first result about Levi subgroups that can distinguish a discrete series. The statement is the same as for cuspidal representations.

\begin{thm}\label{goodlevidiscr}
 Let $\Delta$ be a discrete series representation of $G_n$, $L$ be a maximal Levi sbgroup of $G_n$, and $\mu$ be a character of $L$, such that 
$\Delta$ is $\mu$-distinguished, then $n$ is even and $L\simeq M_{n/2,n/2}$. 
\end{thm}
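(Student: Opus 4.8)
The plan is to push the distinction problem onto the mirabolic subgroup $P_n$ and then exploit the Bernstein--Zelevinsky filtration of $\Delta|_{P_n}$ (Proposition \ref{filtr}) together with the recursion of Proposition \ref{rec}. First I would note that every maximal proper Levi subgroup of $G_n$ is $G_n$-conjugate to some $M_{(p,q)}$ with $p+q=n$ and $p\geq q\geq 1$, and that $M_{(p,q)}=w_{p,q}^{-1}H_{p,q}w_{p,q}$; so, after replacing $\mu$ by a conjugate character, $\Delta$ is $(H_{p,q},\mu)$-distinguished. Since $H_{p,q}\cap P_n$ is a subgroup of $H_{p,q}$ while $\Delta$ and $\Delta|_{P_n}$ share the same underlying space, any nonzero $H_{p,q}$-invariant functional is a fortiori $(H_{p,q}\cap P_n)$-invariant, giving the tautological injection $Hom_{H_{p,q}}(\Delta,\mu)\hookrightarrow Hom_{H_{p,q}\cap P_n}(\Delta|_{P_n},\mu)$. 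It therefore suffices to prove that, when $p>q$, the restriction $\Delta|_{P_n}$ admits no nonzero $(H_{p,q}\cap P_n,\mu)$-invariant functional; this forces $p=q$, hence $n$ even and $L\simeq M_{n/2,n/2}$.

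Writing $\Delta=St_k(\rho)$ with $\rho$ cuspidal on $G_r$ and $n=kr$, Proposition \ref{filtr} endows $\Delta|_{P_n}$ with a filtration whose graded pieces are $F_i=(\Phi^{+})^{ir-1}\Psi^{+}(|.|^{i/2}St_{k-i}(\rho))$ for $i\in\{1,\dots,k\}$. A nonzero $(H_{p,q}\cap P_n,\mu)$-invariant functional on $\Delta|_{P_n}$ vanishes on some largest step of the filtration and hence descends to a nonzero invariant functional on one graded piece $F_i$; so it is enough to show that no $F_i$ is $(H_{p,q}\cap P_n,\mu)$-distinguished when $p>q$. For the top piece $i=k$ one has $St_0(\rho)=\1$, so $F_k=(\Phi^{+})^{n-1}\Psi^{+}(\1)$, and Corollary \ref{homzero} rules this out for every character precisely when $p-1\geq q$, i.e. $p>q$. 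It remains to eliminate the pieces $F_i$ with $1\leq i\leq k-1$.

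For these I would argue by strong induction on $n$, with immediate base cases for small $n$. Stripping the $ir-1$ copies of $\Phi^{+}$ off $F_i$ by repeated use of the two forms of Proposition \ref{rec}, and finishing with a $\Psi^{+}$-reduction of the type underlying Corollary \ref{homsteak} (now stated for an arbitrary $H_{p,q}$ rather than only $H_{m,m}$), reduces the $(H_{p,q}\cap P_n)$-distinction of $F_i$ to the distinction of $\tau_i=|.|^{i/2}St_{k-i}(\rho)$, a representation of $G_{(k-i)r}$, with respect to some maximal Levi $H_{p',q'}$ and some character. Crucially $\tau_i$ is again a discrete series in the sense of this paper, since that notion allows an arbitrary character twist, and $(k-i)r<n$; moreover the two forms of Proposition \ref{rec} send $(p,q)$ to $(p,q-1)$ and then to $(p-1,q-1)$, so each paired reduction preserves $p-q$ and an unpaired one only increases it. Thus $p>q$ propagates to $p'>q'$, and the inductive hypothesis applied to $\tau_i$ forces $p'=q'$, a contradiction. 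Hence no $F_i$ is distinguished when $p>q$, which closes the induction.

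The main obstacle is the index bookkeeping in the inductive step: one must determine exactly how $(p,q)$ transforms when the two forms of Proposition \ref{rec} are interleaved $ir-1$ times and then combined with the concluding $\Psi^{+}$-reduction, confirm that the resulting Levi $H_{p',q'}$ sits in the correct group $G_{(k-i)r}$, and verify that the strict inequality $p>q$ is carried to $p'>q'$ (handling the degenerate endpoint $q'=0$, where $\tau_i$ would have to be a character, separately). Establishing the $H_{p,q}$-analogue of Corollary \ref{homsteak} for the final $\Psi^{+}$-step, keeping track of the positive-character twists $\chi_{p,q},\chi_{p,q-1},\dots$, is the accompanying technical point.
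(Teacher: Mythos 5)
Your proposal is correct and follows essentially the same route as the paper: reduce to $(H_{p,q}\cap P_n)$-distinction of the Bernstein--Zelevinsky graded pieces via Proposition \ref{filtr}, kill the piece $(\Phi^{+})^{n-1}\Psi^{+}(\1)$ with Corollary \ref{homzero}, strip the remaining pieces with Proposition \ref{rec} while tracking that $p-q$ never decreases, and conclude by induction (the paper inducts on $k$ with the cuspidal case from \cite{M} as base, which your induction on $n$ subsumes).
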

\begin{proof}
As $L$ is conjugate to some $H_{p,q}$, for $p\geq q\geq 0$ (and $p+q=n$), it is sufficient to show that for any positive character $\mu$ of $H_{p,q}$, the discrete series $\Delta$ cannot be $(H_{p,q},\mu)$-distinguished if $p\geq q+1$. We write $\Delta=St_k(\rho)$ with $\rho$ a cuspidal representation of $G_r$, with $kr=n$. We are going to prove by induction on $k$ that $\Delta$ cannot be $(H_{p,q},\mu)$-distinguished when $p\geq q+1$. The case $k=1$ is Theorem 2.1 of \cite{M}. Now suppose that $\Delta$ is $(H_{p,q},\mu)$-distinguished, then the restriction 
$\Delta_{|P_n}$ is $(H_{p,q},\mu)$-distinguished, which implies, according to Proposition \ref{filtr}, that there is an integer 
$i$ in $\{1,\dots,k\}$ such that the representation $(\Phi^+)^{ir-1}\Psi^+(|.|^{i/2}St_{k-i}(\rho))$ is $P_n\cap H_{p,q}$-distinguished. 
But $i$ cannot be $k$ according to Corollary \ref{homzero}, and if $i>k$, then applying repeatedly Proposition \ref{rec}, we deduce that for 
some positive character $\chi$, the representation $\Psi^+(|.|^{i/2}St_{k-i}(\rho))$ of 
$P_{n-ir+1}$ is $(P_{n-ir+1}\cap H_{p-l,q-l},\chi)$-distinguished or $(P_{n-ir+1}\cap H_{p-l,q-l-1},\chi)$-distinguished for $l=[ir/2]$, 
depending on the parity of $ir$. This in turn implies that 
$St_{k-i}(\rho)$ is $(H_{p-l,q-l-1},|.|^{-i/2}\chi)$-distinguished or $(H_{p-l-1,q-l-1},|.|^{-i/2}\chi)$-distinguished, which is impossible by induction hypothesis. Thus $\Delta$ cannot be $(H_{p,q},\mu)$-distinguished if $p\geq q+1$.
\end{proof}

This statement doesn't extend without changes to generic representations, see for example \cite{P} or \cite{V}, where one sees that the $G_3$-modules $St_2(1)\times 1$ and $\rho\times 1$ are $M_{2,1}$-distinguished for every cuspidal $\rho$ of $G_2$ with trivial central character. The correct statement is the following in this case.

\begin{thm}\label{goodlevigen}
Let $\pi$ be a generic representation of $G_n$, $L$ be a Levi subgroup of $G_n$, and $\mu$ be a character of $L$ such that 
$\pi$ is $(L,\mu)$-distinguished, then $L\simeq M_{n/2,n/2}$ if $n$ is even, and $L\simeq M_{(n+1)/2,(n-1)/2}$ if $n$ is odd.
\end{thm}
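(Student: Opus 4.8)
The plan is to reduce the generic case to the discrete series case already handled in Theorem~\ref{goodlevidiscr}, using the structure theory of generic representations together with the filtration and Hom-reduction machinery assembled in the Preliminaries. By Theorem~9.7 of \cite{Z} a generic $\pi$ of $G_n$ is a product $\D_1\times\dots\times\D_t$ of unlinked discrete series $\D_j$ of $G_{n_j}$ with $\sum_j n_j=n$. Since $L$ is conjugate to some $H_{p,q}$ with $p\geq q\geq 0$ and $p+q=n$, it suffices to show that if $\pi$ is $(H_{p,q},\mu)$-distinguished for a character $\mu$, then $p-q\leq 1$; equivalently, to rule out $p\geq q+2$. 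As in the proof of Theorem~\ref{goodlevidiscr}, distinction of $\pi$ forces distinction of the restriction $\pi_{|P_n}$, so by the second part of Proposition~\ref{filtr} some factor of the $P_n$-filtration, necessarily of the shape $(\Phi^+)^{n-k-1}\Psi^+(\tau)$ with $\tau=\D_1^{(n_1)}\times\dots\times\D_t^{(n_t)}$ a representation of $G_k$ for some $k\leq n-1$, must be $(P_n\cap H_{p,q},\mu)$-distinguished.

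First I would dispose of the top factor: when $k=0$, the factor is exactly $(\Phi^+)^{n-1}\Psi^+(\1)$, which by Corollary~\ref{homzero} is not $(P_n\cap H_{p,q},\chi)$-distinguished whenever $p\geq q+1$ (here $p-q=n\geq 2$ since $p\geq q+2$). So we may assume $k\geq 1$. Then I apply Proposition~\ref{rec} repeatedly to peel off the $\Phi^+$'s, exactly as in Theorem~\ref{goodlevidiscr}: after $n-k$ applications one obtains a positive character $\chi'$ such that $\Psi^+(\tau)$ over $P_{k+1}$ is distinguished with respect to $P_{k+1}\cap H_{p',q'}$, where the pair $(p',q')$ is obtained from $(p,q)$ by subtracting $l=[(n-k)/2]$ from each (and dropping the $q$-index by one more when $n-k$ is odd), so that $p'+q'\in\{k,k+1\}$ and crucially $p'-q'\geq p-q-1\geq 1$. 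By the definition of $\Psi^+$ this descends to a statement that the $G_k$-representation $\tau$ is $(H_{p'',q''},\chi'')$-distinguished for a twisted character $\chi''$, with $p''-q''\geq p'-q'\geq 1$, i.e. $p''\geq q''+1$.

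The heart of the argument is then to show $\tau$ cannot carry such a distinction, and here I would argue by an induction analogous to the one in Theorem~\ref{goodlevidiscr} but now handling a \emph{product} rather than a single discrete series. The genericity (unlinkedness) of the $\D_j$ guarantees that each $\tau=\D_1^{(n_1)}\times\dots\times\D_t^{(n_t)}$ appearing is again generic, so one can recurse on $n$: $\tau$ is a generic representation of $G_k$ with $k<n$, and the induction hypothesis of the theorem itself applies to say that a Levi $H_{p'',q''}$ distinguishing $\tau$ must have $|p''-q''|\leq 1$, contradicting $p''\geq q''+1$ unless $p''=q''+1$. I expect the main obstacle to be precisely the bookkeeping at the boundary case $p''-q''=1$: one must check that the parity shifts introduced by the $[(n-k)/2]$ reductions in Proposition~\ref{rec} never allow a genuinely unbalanced $(p,q)$ with $p\geq q+2$ to masquerade as a balanced one, so that the strict inequality $p-q\geq 2$ propagates to $p''-q''\geq 1$ with enough room to contradict the inductive bound. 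Tracking the indices carefully through the $\Phi^+$-reductions, verifying that the base case $k=1$ (where $\tau$ is a character of $G_1$, distinguished only by $H_{1,0}$) gives the balanced conclusion, and confirming the multiplicity-one top factor is handled separately by Corollary~\ref{homzero}, should complete the induction and force $p-q\leq 1$.
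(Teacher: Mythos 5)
Your overall skeleton matches the paper's: restrict to $P_n$, invoke Proposition \ref{filtr}, kill the factor $(\Phi^+)^{n-1}\Psi^+(\1)$ with Corollary \ref{homzero}, descend through the remaining factors with Proposition \ref{rec}, and conclude by induction. But there is a genuine gap at the step you yourself call the heart of the argument: you assert that unlinkedness of the $\D_j$ guarantees that each $\tau=\D_1^{(n_1)}\times\dots\times\D_t^{(n_t)}$ occurring in the filtration is again generic, so that the theorem's own induction hypothesis applies to $\tau$. This is false: taking derivatives truncates segments on the left and can turn an unlinked family into a linked one. For instance, with $\rho$ cuspidal on $G_r$, the segments $\D_1=[\rho,\dots,|.|^3\rho]$ and $\D_2=[|.|\rho,|.|^2\rho]$ are unlinked (one contains the other), but $\D_1^{(2r)}=[|.|^2\rho,|.|^3\rho]$ and $\D_2^{(0)}=[|.|\rho,|.|^2\rho]$ are linked, so $\D_1^{(2r)}\times\D_2^{(0)}$ is not a generic irreducible representation and the induction hypothesis, as you state it, simply does not apply to it. The induction therefore does not close.

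The paper's fix is to prove a strictly stronger statement by induction: the conclusion holds for any product $\D_1\times\dots\times\D_t$ of discrete series ordered so that $\D_{i+1}$ does not precede $\D_i$ (an ordering every generic representation admits, since its segments commute). Unlike unlinkedness, this condition \emph{is} stable under passing to the derivatives $\D_i^{(n_i)}$, because a derivative only erases the left end of a segment; that is exactly what allows the induction hypothesis to be applied to $\tau$. The strengthening also supplies the correct base cases: the descent can land on a full-group distinction $(H_{p'',0},\chi'')$, i.e.\ on $\tau$ having a character quotient, and one needs the cases $n=2,3$ (the only ordered products of discrete series with a character quotient are $\mu|.|^{1/2}\times\mu|.|^{-1/2}$ and $\mu|.|\times\mu\times\mu|.|^{-1}$, both violating the ordering), not merely the $G_1$ case you mention. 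Your bookkeeping worry, by contrast, is unproblematic: for $n$ even and $p>q$ one has $p-q\geq 2$, and the descent yields a difference $p-q+1\geq 3$ against an allowed bound of $1$ ($k$ odd), or $p-q\geq 2$ against an allowed bound of $0$ ($k$ even), so there is always room for the contradiction.
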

\begin{proof}
We prove this result by induction again. If $\D=[\dots,\rho]$ is a discrete series with $\rho$ cuspidal, we denote by 
$e(\D)$ the real part of $\rho$'s central character (not that of $\D$'s central character). We will actually need to prove the following stronger statement: if $\pi$ is the representation $\D_1\times \dots \times \D_t$ of $G_n$, where 
$e(\D_{i+1})\geq e(\D_i)$, and if 
$\pi$ is $(L,\mu)$-distinguished, then 
$L\simeq M_{n/2,n/2}$ if $n$ is even, and $L\simeq M_{(n+1)/2,(n-1)/2}$ if $n$ is odd (notice that the $\D_i$'s always commute in a generic 
representation, hence they can always be ordered this way).\\
We start with $n=2$ and $3$. In the case $n=2$, the representation $\pi$ is $(G_2,\mu)$-distinguished if and only if the character $\mu$ 
is a quotient of $\pi$, but the only product of discrete series having $\mu$ as a quotient is $\pi =\mu|.|^{1/2}\times \mu|.|^{-1/2}$, which 
contradicts the hypothesis that $e(\D_{i+1})\geq e(\D_{i})$. The same argument works for the case $n=3$, because the only product of 
discrete series having $\mu$ as a quotient is $\pi = \mu|.|^{1}\times \mu|.|^0\times \mu|.|^{-1}$.\\
Now suppose that $n$ is even, and that $\pi=\D_1\times \dots \times \D_t$ is $(H_{p,q},\mu)$-distinguished for $p>q$, then again by Proposition 
\ref{filtr}, there would be a representation $\tau=\D_1^{(n_1)}\times \dots \times \D_t^{(n_t)}$ of $G_k$ with $k\leq n-1$, such that 
$(\Phi^+)^{n-k-1}\Psi^+(\tau)$ is $(H_{p-l,q-l-1},\mu)$-distinguished. The integer $k$ cannot be $0$ according to Corollary \ref{homzero}. 
If $k>0$, then 
again applying repeatedly Proposition \ref{rec}, we deduce that for some positive character $\chi$, the representation $\tau$ is 
 $(H_{p-l,q-l-1},\mu\chi)$-distinguished if $k$ is odd, or $(H_{p-l-1,q-l-1},\mu\chi)$-distinguished if $k$ is even, for some 
integer $l$. But this contradicts the induction hypothesis as the discrete series $\D_i^{(n_i)}$ still satisfy the property that 
$e(\D_{i+1}^{(n_{i+1})})\geq e(\D_i^{(n_i)})$ (as $\D^{(n)}$ is obtained by erasing the left end  of the "segment" $\D$). 
The case $n$ odd is treated similarly.
\end{proof}

\section{The functional equation of the local exterior square $L$-function, when $n$ is even}

Here $n$ is even and equals $2m$. A functional equation for the exterior square $L$-function is proved by a global argument, for generic 
representations appearing as local components of a cuspidal automorphic representation of the adelic points of $G_{2m}$ in \cite{KR}. 
We give a local proof, which works for all generic representations of $G_{2m}$. We first adapt the arguments of \cite{FJ} and \cite{JR}, 
to show that if $\pi$ is an irreducible representation 
of $G_n$, then there is an injection of the vector space $Hom_{P_n\cap S_n}(\pi,\Theta)$ into $Hom_{P_n\cap M_{m,m}}(\pi,1)$.\\

We will not discuss the normalisation of Haar measures in the following, but some choices have to be made for 
some of the integration formulas to be valid. We start with the following useful lemma.

\begin{LM}\label{utilLM}
Let $\pi$ be a representation of $P_{(m,m)}$, $L$ be an element of $Hom_{N_{m,m}}(\pi,\Theta)$, and $v$ belong the space of $\pi$. 
Denote by $S$ the map $p\mapsto L(\pi(p)v)$ on $P_{(m,m)}$ and by $\tilde{S}$ the map $g\mapsto S(diag(g,I_m))$ on $G_m$. 
Then there is $\xi$ 
in $\mathcal{C}^\infty_c(\mathcal{M}_m)$, such that for $g$ in $G_n$, one has $\tilde{S}(g)=\tilde{S}(g)\xi(g)$. 
In particular, the integral $c_k(S)=\int_{g\in G_m,|g|=q^{-k}}\tilde{S}(g)d^*g$ converges absolutely for all $k$ in $\Z$, and is zero for $k<<0$.
\end{LM}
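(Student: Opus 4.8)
The plan is to show that the function $\tilde S$, though a priori merely a smooth function on $G_m$, in fact has support contained in a compact subset of the ambient matrix space $\mathcal{M}_m$; both assertions of the lemma then follow formally. The only genuine point is this support statement, where the smoothness of $v$ and the $\Theta$-equivariance of $L$ conspire to confine $\tilde S$ to bounded matrices; the remaining deductions are bookkeeping with the conductor of $\theta$ and an ultrametric estimate on the determinant.

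First I would exploit the smoothness of $v$ together with the equivariance of $L$. Since $v$ is a smooth vector, there is an integer $N$, depending only on $v$, such that $\pi\begin{pmatrix} I_m & Y \\ 0 & I_m \end{pmatrix}v = v$ whenever all entries of $Y$ lie in $\mathfrak{p}^N$. Using the elementary identity $diag(g,I_m)\begin{pmatrix} I_m & Y \\ 0 & I_m\end{pmatrix} = \begin{pmatrix} I_m & gY \\ 0 & I_m\end{pmatrix}diag(g,I_m)$ together with the relation $L(\pi\begin{pmatrix} I_m & X \\ 0 & I_m\end{pmatrix}w) = \theta(Tr\, X)L(w)$, I would compute, for every such $Y$,
$$\tilde S(g) = L(\pi(diag(g,I_m))\pi\begin{pmatrix} I_m & Y \\ 0 & I_m\end{pmatrix}v) = \theta(Tr(gY))\,\tilde S(g).$$
Hence at any point $g$ with $\tilde S(g)\neq 0$ one must have $\theta(Tr(gY))=1$ for all $Y$ with entries in $\mathfrak{p}^N$.

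Next I would convert this vanishing of an additive character into a bound on the entries of $g$. Taking $Y$ with a single nonzero entry shows $\theta(g_{ij}y)=1$ for all $y\in\mathfrak{p}^N$ and all $i,j$; since $\theta$ has finite conductor, this forces $val(g_{ij})$ to be bounded below by a constant independent of $g$, that is, every entry $g_{ij}$ lies in a fixed fractional ideal. Thus the support of $\tilde S$ lies in the set $\Omega$ of matrices whose entries are bounded by a fixed power of $q$, which is a compact open subset of $\mathcal{M}_m$. Taking $\xi$ to be the characteristic function of $\Omega$, an element of $\mathcal{C}^\infty_c(\mathcal{M}_m)$ equal to $1$ on the support of $\tilde S$, gives $\tilde S = \tilde S\,\xi$ on $G_m$.

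Finally the two consequences drop out. On the shell $\{g:|g|=q^{-k}\}$ the support of $\tilde S$ is $\Omega$ intersected with a locus where $|\det g|=q^{-k}$ is fixed and nonzero, hence bounded away from $\det=0$ and therefore compact inside $G_m$; as $\tilde S$ is locally constant, its restriction there is integrable and $c_k(S)$ converges absolutely. Moreover, the entries of any $g\in\Omega$ being bounded, the ultrametric inequality bounds $|\det g|$ above by a fixed power of $q$, so the shell meets $\Omega$ only when $k$ is bounded below; consequently $c_k(S)=0$ for $k\ll 0$.
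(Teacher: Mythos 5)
Your proof is correct and is essentially the paper's argument: the paper averages $v$ over a compact open lattice $C\subset\mathcal{M}_m$ of unipotent elements fixing $v$ and obtains $\tilde S=\tilde S\,\xi$ with $\xi=\widehat{\mathds{1}_C}$, which is exactly the indicator of the dual lattice you produce by your pointwise character-orthogonality computation. The deduction of absolute convergence of $c_k(S)$ and its vanishing for $k\ll 0$ matches the paper's as well.
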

\begin{proof}
Let $C$ be a compact open subgroup of $\mathcal{M}_m$, such that $v$ is invariant under $\begin{pmatrix} I_m & C \\ & I_m \end{pmatrix}$. Let $f$ 
be the indicator function of $C$. For a Haar measure $dx$ on $\mathcal{M}_m$, for any $p$ in $P_{(m,m)}$, we have 
$$S(p)=\int_{\mathcal{M}_m} S(p\begin{pmatrix} I_m & x \\ & I_m \end{pmatrix})f(x)dx.$$ 
Taking $p= diag(g,I_m) $, and using the left invariance of $S$, we obtain $\tilde{S}(g)=\tilde{S}(g)\xi(g)$, for 
$\xi(g)=\int_{\mathcal{M}_m} f(x)\Theta(gx) dx$, which is the Fourier transform of $f$. For the second part, one integrates on 
a compact part of $\mathcal{M}_m$ (the intersection of the support of $\xi$ and of $\{g\in M_m, |g|=q^{-k}\}$) which is a subset of $G_m$,
 hence on a compact of $G_m$. As the function $\tilde{S}$ is continuous, hence bounded on this set, the integral converges absolutely. 
It is of course zero as soon as $k$ is small enough for the support of $\xi$ not to meet the set $\{g\in G_m, |g|=q^{-k}\}$.
\end{proof} 

We keep on following \cite{FJ}, Section 3. The following lemma follows at once from smoothness considerations.
 
\begin{LM}\label{smoothLM}
Let $S$ belong to $Ind_{N_{m,m}}^{G_n}(\Theta)$, we can choose a smooth map $\phi$, with variables $u\in \mathcal{M}_m$, $b\in G_m$ and $k\in G_n(\o)$, 
and with compact support on $\mathcal{M}_n\times G_n \times G_n(\o)$, such that 
$$S(g)=\int_{u,b,k} S(g\begin{pmatrix} b^{-1} & \\ & 1\end{pmatrix}\begin{pmatrix} 1 & u \\ & 1\end{pmatrix}
\begin{pmatrix} 1 & \\ & b\end{pmatrix}k)\phi(u,b,k)|b|^md^*b^*dudk.$$
\end{LM}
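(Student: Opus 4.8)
The plan is to take seriously the remark preceding the statement, namely that the lemma \textquotedblleft follows at once from smoothness considerations,\textquotedblright\ and to reduce the asserted integral identity to a purely local averaging of the locally constant function $S$. First I would record the shape of the group element occurring in the integrand. A direct block computation gives
$$\begin{pmatrix} b^{-1} & \\ & 1\end{pmatrix}\begin{pmatrix} 1 & u \\ & 1\end{pmatrix}\begin{pmatrix} 1 & \\ & b\end{pmatrix}=\begin{pmatrix} b^{-1} & b^{-1}ub \\ 0 & b\end{pmatrix}=:m(u,b),$$
so that $m$ is a continuous function of $(u,b)\in\mathcal{M}_m\times G_m$ with $m(0,I_m)=I_n$. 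Hence the map $(u,b,k)\mapsto m(u,b)k$ is continuous on $\mathcal{M}_m\times G_m\times G_n(\o)$ and sends $(0,I_m,I_n)$ to $I_n$.

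Next I would use that $S$, being an element of $Ind_{N_{m,m}}^{G_n}(\Theta)$, is smooth, hence right-invariant under some compact open subgroup $K_1$ of $G_n$: one has $S(gh)=S(g)$ for all $g\in G_n$ and all $h\in K_1$, and this holds \emph{uniformly} in $g$. By continuity of $(u,b,k)\mapsto m(u,b)k$ at the point $(0,I_m,I_n)$, I can then choose compact open neighbourhoods $U_0\subset\mathcal{M}_m$ of $0$, $B_0\subset G_m$ of $I_m$, and $K_0\subset G_n(\o)$ of $I_n$, in such a way that $m(u,b)k\in K_1$ for every $(u,b,k)$ in $W:=U_0\times B_0\times K_0$.

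Then I would set $\phi=c\,\mathds{1}_{W}$, where $\mathds{1}_{W}$ is the indicator function of $W$ (which is smooth, i.e.\ locally constant, in the nonarchimedean setting) and $c>0$ is a normalising constant. Since $|b|^m$ is bounded above and below on the compact set $B_0$, the integral $\int \mathds{1}_{W}(u,b,k)\,|b|^m\,d^*b\,du\,dk$ is finite and strictly positive, so $c$ can be fixed so that $\int\phi(u,b,k)\,|b|^m\,d^*b\,du\,dk=1$. This $\phi$ is smooth and compactly supported, as required. Finally, for a fixed $g\in G_n$ and any $(u,b,k)$ in the support of $\phi$ we have $m(u,b)k\in K_1$, so $S(g\,m(u,b)\,k)=S(g)$ by the right-invariance of $S$, and therefore
$$\int_{u,b,k} S\!\left(g\begin{pmatrix} b^{-1} & \\ & 1\end{pmatrix}\begin{pmatrix} 1 & u \\ & 1\end{pmatrix}\begin{pmatrix} 1 & \\ & b\end{pmatrix}k\right)\phi(u,b,k)\,|b|^m\,d^*b\,du\,dk=S(g)\int_{u,b,k}\phi(u,b,k)\,|b|^m\,d^*b\,du\,dk=S(g).$$

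The argument needs no structural input beyond smoothness, which is exactly why the statement holds \textquotedblleft at once.\textquotedblright\ The only two points that genuinely require attention are the \emph{uniformity} in $g$ of the invariance subgroup $K_1$ (this is precisely what it means for $S$ to be a smooth vector of the induced representation, and it is what allows a single $\phi$ to work for all $g$ simultaneously), and the finiteness and positivity of the normalising integral, which is where the boundedness of $|b|^m$ on the compact support of $\phi$ is used. Neither of these is a serious obstacle, so the main care is simply in bookkeeping the neighbourhoods so that $m(u,b)k$ lands inside $K_1$.
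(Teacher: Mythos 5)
Your argument is correct and is precisely the ``smoothness consideration'' the paper alludes to: the paper gives no proof of this lemma, and your device of averaging $S$ against a normalised indicator function of a small compact open product neighbourhood chosen so that $m(u,b)k$ lands in the compact open subgroup $K_1$ under which $S$ is right-invariant is the intended justification. The two points needing care --- the uniformity in $g$ of $K_1$ (which is built into the paper's definition of $Ind_H^G$) and the finiteness and positivity of the normalising integral --- are exactly the ones you address.
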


Now we introduce a quite long list of notations. We denote by $\pi$ an irreducible representation of $G_n$, and denote by $L$ 
an element of $Hom_{N_{n,n}}(\pi,\Theta)$. Thanks to Frobenius reciprocity, we take $S$ in the image 
$S(\pi,L)$ of $\pi$ in the induced representation $Ind_{N_{m,m}}^{G_n}(\Theta)$. Lemma \ref{smoothLM} associates (non uniquely) to $S$ a map 
$\phi$ which is smooth with compact support in 
$\mathcal{M}_m\times G_m \times G_m(\o)$.\\
If $\phi'$ is the characteristic function of $K^{-1}$, where $K$ is the support of 
$\phi$ in the variable $b\in G_m$, we denote by $\Phi$ the map with variables 
$a\in \mathcal{M}_m, x\in \mathcal{M}_m, b \in G_m$ and $k\in G_n(\o)$, defined by 
$$\Phi(\begin{pmatrix} a & x \\ & b \end{pmatrix}, k))=\int_{u,v}\phi(u,b,k)\phi'(v)\Theta(ua-vx)dudv.$$ 
Defined this way, $\Phi$ is a smooth map with compact support for $(a,x,b,k) \in \mathcal{M}_m \times \mathcal{M}_m \times G_m \times G_n(\o)$, 
because of the properties of the Fourier transform. 
Let $\Omega$ be the open set of matrices $\begin{pmatrix} a & b \\ c & d \end{pmatrix}$ of $\mathcal{M}_n$, with $(c \ d)$ of rank $m$ 
in $\mathcal{M}_{m,n}$, and $\Omega_0$ the set of matrices $\begin{pmatrix} a & b \\ 0 & d\end{pmatrix}$, with $d$ invertible. The map 
$r:(p,k)\mapsto pk$ from $\Omega_0\times K$ to $\Omega$ is proper.
We then define $\Phi_*$ on the open set $\Omega$ on $G_n$ for $p\in P_{m,m}$, by 
$$\Phi_*(pk)=\int_{k'\in G_n(\o)\cap P_{m,m}} \Phi(pk'^{-1},k' k)dk'.$$ 
It is well defined, it has compact support in $\Omega$ as $r$ is proper. Finally, it is fixed by right multiplication under 
a compact open subgroup of $G_n(\o)$, hence because it has compact support in $\Omega$, it is smooth on $\Omega$. We extend it by zero outside 
$\Omega$, to obtain a smooth map $\Phi_*$ with compact support in $\mathcal{M}_n$. 
To finish this list of notations, we fix $U$ a subgroup of $G_n(\o)$, which fixes 
$\Phi_*$ by left multiplication of the variable, and denote by $S^U$ the map $\int_U \lambda(u) S du$ (where $\l(u)S(g)=S(u^{-1}g)$). It is a "matrix coefficient" of 
the representation $\pi$. We state a last lemma, before attacking the core of the problem.

\begin{LM}\label{lastLM}
For $S$, $\phi$ and $\Phi$ as above, the integrals $$I(S,\Phi,a,b)=\int_{x\in \mathcal{M}_n,k\in G_n(\o)}S(\begin{pmatrix} a & x \\ & b \end{pmatrix} k)
\Phi(\begin{pmatrix} a & x \\ & b \end{pmatrix},k)dx dk$$ and 
$$J(S,\phi,a,b)=\int_{u\in \mathcal{M}_n,k\in G_n(\o)}S(\begin{pmatrix} a &  \\ & 1\end{pmatrix}\begin{pmatrix} 1 & u \\ & 1 
\end{pmatrix}\begin{pmatrix} 1 &   \\ & b \end{pmatrix} k)\phi(u,b,k)du dk$$ both converge absolutely, and are equal. 
They define a map which is smooth and with compact support with respect to the variables 
$a\in \mathcal{M}_m$, and $b\in G_m$, and $k\in G_n(\o)$.
\end{LM}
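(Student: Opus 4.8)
The plan is to handle convergence of the two integrals separately, then establish the equality $I=J$ by a Fourier inversion computation, and finally read off smoothness and compact support from the explicit shape of $I$. Convergence is the easy part. In $J(S,\phi,a,b)$ the integrand contains $\phi(u,b,k)$, which has compact support in $(u,k)$, while $S$ is smooth, hence continuous and bounded on compact sets; so the integrand is bounded with compact support and the integral converges absolutely. For $I(S,\Phi,a,b)$ one uses that, as already recorded when $\Phi$ was introduced, $\Phi$ is smooth with compact support in $(a,x,b,k)$; thus for fixed $(a,b)$ it is bounded with compact support in $(x,k)$, and the same argument gives absolute convergence of $I$.

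The substance of the lemma is the identity $I=J$, which I would prove by exploiting the quasi-invariance of $S$ under $N_{m,m}$. Writing $\begin{pmatrix} a & x \\ & b\end{pmatrix}=\begin{pmatrix} I_m & xb^{-1} \\ & I_m\end{pmatrix}\begin{pmatrix} a & \\ & b\end{pmatrix}$ and using $\Theta\begin{pmatrix} I_m & X \\ & I_m\end{pmatrix}=\theta(Tr(X))$, one gets $S(\begin{pmatrix} a & x \\ & b\end{pmatrix}k)=\theta(Tr(xb^{-1}))\,S(\begin{pmatrix} a & \\ & b\end{pmatrix}k)$. Substituting this together with the definition of $\Phi$ into $I$, the variable $x$ occurs only through the factor $\theta(Tr(xb^{-1}))$ and through the partial Fourier transform of $\phi'$ built into $\Phi$; carrying out the $x$-integration is then exactly Fourier inversion and evaluates $\phi'$ at $b^{-1}$. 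Since $\phi'$ is the indicator of $K^{-1}$ and $\phi$ is supported in $b$ on $K$, the product $\phi'(b^{-1})\phi(u,b,k)$ equals $\phi(u,b,k)$ for every $b$, so what remains is $\int_{u,k}\phi(u,b,k)\,\theta(Tr(ua))\,S(\begin{pmatrix} a & \\ & b\end{pmatrix}k)\,du\,dk$. On the $J$ side I would simplify the argument of $S$ by the matrix identity $\begin{pmatrix} a & \\ & 1\end{pmatrix}\begin{pmatrix} 1 & u \\ & 1\end{pmatrix}\begin{pmatrix} 1 & \\ & b\end{pmatrix}=\begin{pmatrix} I_m & au \\ & I_m\end{pmatrix}\begin{pmatrix} a & \\ & b\end{pmatrix}$ and apply the same quasi-invariance to extract $\theta(Tr(au))$; since $Tr(au)=Tr(ua)$ this reproduces the displayed expression, giving $I=J$.

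The delicate point, and the one I expect to require care, is the $x$-integration, because the naive interchange of the $x$- and $v$-integrals is not absolutely convergent: the $x$-integral of $\theta(Tr((b^{-1}-v)x))$ is purely oscillatory and only makes sense as a distribution. To stay rigorous I would avoid invoking a literal Dirac mass and instead argue at the level of genuine Schwartz--Bruhat functions. For fixed $(a,b,k)$ the map $x\mapsto \Phi(\begin{pmatrix} a & x \\ & b\end{pmatrix},k)$ is, up to the scalar $\int_u\phi(u,b,k)\theta(Tr(ua))\,du$, the Fourier transform of the compactly supported locally constant function $\phi'$; pairing this Schwartz--Bruhat function against $\theta(Tr(xb^{-1}))$ and invoking the Fourier inversion formula on $\mathcal{M}_m$ then legitimately recovers $\phi'(b^{-1})$. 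This keeps every integral in sight absolutely convergent and isolates the one analytic input, namely Fourier inversion with a fixed normalisation of additive Haar measure.

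Finally, smoothness and compact support would be read directly off the $I$-form. The map $\Phi$ is smooth with compact support in all of $(a,x,b,k)$ and $S$ is locally constant, so the integrand of $I$ is locally constant in $(a,b)$ and supported, in $(a,b)$, inside the compact projection of $\mathrm{supp}\,\Phi$; integrating over the compact ranges of $x$ and $k$ then yields a locally constant, hence smooth, compactly supported function of $(a,b)$. Making the same observation on the integrand before the $dk$-integration gives the claimed regularity jointly in the remaining variables as well.
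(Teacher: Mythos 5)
Your proof is correct and follows essentially the same route as the paper's: the quasi-invariance $S(\begin{pmatrix} a & x \\ & b\end{pmatrix}k)=\Theta(xb^{-1})S(\begin{pmatrix} a & \\ & b\end{pmatrix}k)$, Fourier inversion combined with the choice of $\phi'$ as the indicator of $K^{-1}$ to collapse the $x$-integral, and the matrix identity producing $\Theta(ua)$ on the $J$-side. The extra care you take with the order of the $v$- and $x$-integrations, and with convergence and support, only fills in steps the paper dispatches with ``by Fourier's inversion formula, and because of the choice of $\phi'$'' and ``the assertion about smoothness and support are obvious.''
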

\begin{proof}
Consider the map $f(a,x,b,k)=S(\begin{pmatrix} a & x \\ & b \end{pmatrix} k)\Phi(\begin{pmatrix} a & x \\ & b \end{pmatrix},k)$. Because 
of the invariance property of $S$, we have 
$f(a,x,b,k)=S(\begin{pmatrix} a &  \\ & b \end{pmatrix} k)\Phi(\begin{pmatrix} a & x \\ & b \end{pmatrix},k)\Theta(xb^{-1})$. 
By Fourier's inversion formula, and because of the choice of $\phi'$, 
we thus have 
$$\int_{x\in \mathcal{M}_n} f(a,x,b,k)dx =\int_{u\in \mathcal{M}_n} S(\begin{pmatrix} a &  \\ & b \end{pmatrix} k)\phi(u,b,k)\Theta(ua)du.$$
Finally, the relation 
$$S(\begin{pmatrix} a &  \\ & b \end{pmatrix} k)\Theta(ua)=S(\begin{pmatrix} a &  \\ & 1 \end{pmatrix}\begin{pmatrix} 1 & u \\ & 1 \end{pmatrix}
\begin{pmatrix} 1 &  \\ & b \end{pmatrix} k) $$ gives the expected relation. The assertion about smoothness and support are obvious.
\end{proof}

For $k$ and $l$ in $\Z$, we denote by $a_{k,l}(S,\Phi)$ the absolutely convergent integral 
$$a_{k,l}(S,\phi)=q^{-l m}\int_{|a|=q^{-k},|b|=q^{-l}}I(S,\Phi,a,b)d^*ad^*b.$$
We claim that the sum $\sum_{k,l} a_{k,l}(S,\Phi)q^{-ks}q^{-ls}$ is absolutely convergent for $Re(s)$ greater than a real $r_\pi$ depending 
only on $\pi$. 

\begin{prop}
The sum $\sum_{k,l} a_{k,l}(S,\Phi)q^{-ks}q^{-ls}$ is absolutely convergent for $Re(s)$ greater than a real $r_\pi$ depending 
only on $\pi$, it extends moreover meromorphically to a multiple of the $L$ function $L(\pi,s+1/2)$ (see \cite{GJ}), i.e. to an element of 
$L(\pi,s+1/2)\C[q^{\pm s}]$. 
\end{prop}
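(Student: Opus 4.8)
The plan is to recognise the double series $\sum_{k,l} a_{k,l}(S,\Phi)q^{-(k+l)s}$ as a single Godement--Jacquet zeta integral for $\pi$ on $G_n$, and then to invoke \cite{GJ}. The starting point is that on the sphere $\{|a|=q^{-k}\}$ one has $q^{-ks}=|a|^s$, and likewise $q^{-ls}=|b|^s$ and $q^{-lm}=|b|^m$ on $\{|b|=q^{-l}\}$; hence summing $a_{k,l}(S,\Phi)q^{-(k+l)s}$ over $k,l\in\Z$ is the same as integrating against $d^*a\,d^*b$ over all of $G_m\times G_m$, which gives, at least formally,
\begin{equation*}
\sum_{k,l} a_{k,l}(S,\Phi)q^{-(k+l)s}=\int_{G_m\times G_m} I(S,\Phi,a,b)\,|a|^{s}|b|^{s+m}\,d^*a\,d^*b.
\end{equation*}
Here the sum over $l$ is in fact finite, since $I(S,\Phi,a,b)$ is compactly supported in $b\in G_m$ by Lemma \ref{lastLM}, so all the analytic content is in the $a$-direction, where $I$ is only compactly supported in $\mathcal{M}_m$ and $|a|$ may tend to $0$.

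Next I would unfold $I(S,\Phi,a,b)$ using its definition as an integral over the $x$- and $k$-variables, average over $U$ to replace $S$ by $S^U$, and use the very definition of $\Phi_*$, which folds the integration over $G_n(\o)\cap P_{m,m}$ into the outer $G_n(\o)$ variable. The effect should be to convert the block-triangular parametrisation $g=\begin{pmatrix} a & x \\ & b\end{pmatrix}k$ into the Iwasawa decomposition $G_n=P_{(m,m)}G_n(\o)$ and to rewrite the whole expression as
\begin{equation*}
\int_{G_n} S^U(g)\,\Phi_*(g)\,|g|^{s+m}\,dg ,
\end{equation*}
the modulus character $\delta_{P_{(m,m)}}$ of the measure decomposition together with the Jacobian of the map $x'\mapsto ax'$ producing precisely the shift of $|a|^s$ into $|a|^{s+m}=|g|^{s+m}/|b|^{s+m}$ needed to turn $|a|^s|b|^{s+m}$ into the homogeneous weight $|g|^{s+m}$. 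One should check here that $S^U$ is a genuine matrix coefficient of $\pi$: writing $S(g)=L(\pi(g)v)$, the $U$-average is $S^U(g)=\widetilde{L}(\pi(g)v)$ with $\widetilde{L}=\int_U\pi^{\vee}(u)L\,du$, and since $\widetilde L$ is fixed by the open subgroup $U$ it lies in the smooth dual of $\pi$.

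With this identification both assertions follow from \cite{GJ}. For $n=2m$ the Godement--Jacquet normalisation of the zeta integral of $\pi$ is $|g|^{s'+(n-1)/2}=|g|^{s'+m-1/2}$, so the choice $s'=s+1/2$ reproduces exactly the weight $|g|^{s+m}$ above, and $L(\pi,\cdot)$ evaluated at $s+1/2$ is the relevant factor. Since $S^U$ is a matrix coefficient of $\pi$, whose growth is governed by the exponents of $\pi$ alone, and $\Phi_*\in\sm_c(\mathcal{M}_n)$, the integral converges absolutely for $Re(s)>r_\pi$ with $r_\pi$ depending only on $\pi$; and by the basic theorem of Godement and Jacquet it continues to a rational function of $q^{-s}$ lying in the fractional ideal $L(\pi,s+1/2)\C[q^{\pm s}]$, $L(\pi,\cdot)$ being by definition the generator of that ideal.

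The main obstacle is the middle step: verifying that the product integral over $G_m\times G_m$, together with its inner $x$- and $k$-integrations and the two averagings $S\mapsto S^U$ and $\Phi\mapsto\Phi_*$, genuinely recombines into a single Godement--Jacquet integral over $G_n$ with the correct measure and exponent. Three points need care: that the level sets $\{|a|=q^{-k}\}$ really tile $G_m$ so that the $(k,l)$-sum is the $G_m\times G_m$-integral; that the $\Theta$-equivariance of $S$ absorbs the $N_{m,m}$-direction so that, after the Jacobian and modulus bookkeeping, only $|\det|$ survives and the power of $|a|$ comes out as $s+m$; and that the averaging defining $\Phi_*$ matches the single $G_n(\o)$-integration so that the integrand descends to a well-defined function on $G_n$. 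Once these are settled, the convergence and continuation are the routine content of \cite{GJ}.
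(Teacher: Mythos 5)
Your proposal is correct and follows essentially the same route as the paper: both identify the double sum, after replacing $\Phi$ by $\Phi_*$ and $S$ by its average $S^U$, with the Godement--Jacquet zeta integral $\int_{G_n}S^U(g)\Phi_*(g)|g|^{s+m}d^*g$, and then invoke \cite{GJ} for absolute convergence in a half-plane and continuation to an element of $L(\pi,s+1/2)\C[q^{\pm s}]$. The bookkeeping points you flag as needing care (the tiling of $G_m\times G_m$ by the level sets, the Iwasawa/modulus shift producing the weight $|g|^{s+m}$, and the matching of the $\Phi_*$-averaging with the $G_n(\o)$-integration) are precisely the steps the paper passes over with the phrase ``with obvious notations''.
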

\begin{proof}
First, we observe that the integral $a_{k,l}(S,\Phi)$ is equal to $a_{k,l}(S,\Phi_*)$, with obvious notations. 
Moreover, $|a_{k,l}(S,\Phi_*)|\leq a_{k,l}(|S|,|\Phi_*|)$, in particular, for all fixed $j$, the finite sum 
$\sum_{k+l=j}|a_{k,l}(S,\Phi_*)||q^{-ks}q^{-ls}|$ is less than or equal to $\sum_{k+l=j}a_{k,l}(|S|,|\Phi_*|)|q^{-ks}q^{-ls}|$ which is itself equal to 
$$\int_{|g|=q^{-j}}|S(g)||\Phi_*(g)||g|^{Re(s)+m}d^*g=\int_{|g|=q^{-j}}|S^U(g)||\Phi_*(g)||g|^{Re(s)+m}d^*g$$ 
$$=\sum_{k+l=j}a_{k,l}(|S^U|,|\Phi_*|)|q^{-ks}q^{-ls}|.$$ But the sum $$\sum_{k,l} a_{k,l}(|S^U|,|\Phi_*|)|q^{-ks}q^{-ls}|$$ is equal to 
$\int_{G_n}|S^U(g)||\Phi_*(g)||g|^{Re(s)+m}d^*g$, which is known to be convergent for $Re(s)$ greater than some $r_\pi$, by 
\cite{GJ}. Morever, by [loc.cit] again, we know that $\sum_{k,l} a_{k,l}(S,\Phi)q^{-ks}q^{-ls}= \int_{G_n}S^U(g)\Phi_*(g)|g|^{s+m}d^*g$ extends 
meromorphically to a polynomial multiple of the $L$ function $L(\pi,s+1/2)$ on $\C$.
\end{proof}

Now we also know that $a_{k,l}(S,\Phi)$ is equal to $$b_{k,l}(S,\phi)= q^{-lm}\int_{|a|=q^{-k},|b|=q^{-l}}J(S,\phi,a,b)d^*a d^*b.$$ 
We are now able to prove the following seaked statement.

\begin{prop}\label{continuation}
The sum $I(S,s)=\sum_{k\in \Z} c_k(S) q^{-ks}$ converges absolutely for $Re(s)> r_\pi$, it defines a meromorphic function, which extends to $\C$ 
as a polynomial multiple of $L(\pi,s+1/2)$, which we still denote by $I(S,s)$.
\end{prop}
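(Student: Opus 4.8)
The plan is to recognise the one-variable Dirichlet series $I(S,s)$ as a mere repackaging of the two-variable series $\sum_{k,l}a_{k,l}(S,\Phi)q^{-ks}q^{-ls}$ of the previous proposition; once this identification is made, both the half-plane of absolute convergence and the meromorphic continuation as a polynomial multiple of $L(\pi,s+1/2)$ are inherited directly from that proposition.

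First I would feed the special element $g=\mathrm{diag}(a,I_m)$ into the integral representation of Lemma \ref{smoothLM}. Since $\mathrm{diag}(a,I_m)\,\mathrm{diag}(b^{-1},I_m)=\mathrm{diag}(ab^{-1},I_m)$, the right-hand side collapses to
$$\tilde{S}(a)=\int_{b\in G_m}J(S,\phi,ab^{-1},b)\,|b|^m\,d^*b,$$
where the inner integral over $u$ and $k$ is precisely the quantity $J(S,\phi,ab^{-1},b)$ of Lemma \ref{lastLM}. Crucially, $\phi$ has compact support in the variable $b\in G_m$, so for fixed $a$ this is really an integral over a compact set, and only finitely many values of $|b|$ contribute.

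Next I would integrate this identity over $\{|a|=q^{-k}\}$ to compute $c_k(S)$. Performing the right translation $a'=ab^{-1}$ (legitimate since $G_m$ is unimodular, so $d^*a$ is preserved) turns the constraint $|a|=q^{-k}$ into $|a'|=q^{-k}|b|^{-1}$; decomposing the $b$-integral into the level sets $|b|=q^{-l}$, on which $|b|^m=q^{-lm}$, yields
$$c_k(S)=\sum_{l}q^{-lm}\int_{|a'|=q^{-(k-l)},\,|b|=q^{-l}}J(S,\phi,a',b)\,d^*a'\,d^*b=\sum_{l}b_{k-l,l}(S,\phi),$$
a finite sum because of the support of $\phi$ in $b$. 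I would then invoke the already recorded equalities $a_{k,l}(S,\Phi)=b_{k,l}(S,\phi)$ and reindex by $j=k-l$:
$$I(S,s)=\sum_{k}c_k(S)q^{-ks}=\sum_{j,l}b_{j,l}(S,\phi)q^{-(j+l)s}=\sum_{j,l}a_{j,l}(S,\Phi)q^{-js}q^{-ls}.$$

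The only point requiring genuine care is the legitimacy of interchanging the $b$-integration with the $a$-integration and of the subsequent rearrangement of the double series. This is not a real obstacle: the bound $|c_k(S)|\le\sum_l|b_{k-l,l}(S,\phi)|$ together with the absolute convergence of $\sum_{k,l}|a_{k,l}(S,\Phi)|\,|q^{-ks}q^{-ls}|$ for $Re(s)>r_\pi$ established in the previous proposition shows that every manipulation takes place inside an absolutely convergent family, so Fubini applies throughout. Having identified $I(S,s)$ with $\sum_{k,l}a_{k,l}(S,\Phi)q^{-ks}q^{-ls}$, the previous proposition gives absolute convergence for $Re(s)>r_\pi$ and meromorphic continuation to $\C$ as an element of $L(\pi,s+1/2)\,\C[q^{\pm s}]$, which is exactly the claim.
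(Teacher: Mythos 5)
Your proposal is correct and is essentially the paper's own argument run in the opposite direction: the paper starts from the double sum $\sum_{k,l}b_{k,l}(S,\phi)q^{-ks}q^{-ls}$ and reindexes $k\mapsto k-l$ to arrive at $\sum_k c_k(S)q^{-ks}$, whereas you start from $c_k(S)$ and Lemma \ref{smoothLM} to recover the double sum; the key identity $c_k(S)=\sum_l b_{k-l,l}(S,\phi)$ and the appeal to the previous proposition are the same. Your explicit justification of the rearrangement via absolute convergence is a welcome addition that the paper leaves implicit.
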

\begin{proof}
We write, for fixed $l$, $\sum_{k} b_{k,l}(S,\phi)q^{-ks}=\sum_{k} b_{k-l,l}(S,\phi)q^{-ks}q^{ls}$. 
We then write the equalities, where the sum over $l$ is finite as $J(S,\phi,a,b)$ has compact support in $b$:
$$\sum_{k,l} b_{k,l}(S,\phi)q^{-ks}q^{-ls}=\sum_{l} (\sum_{k} b_{k,l}(S,\phi)q^{-ks}) q^{-ls}$$
$$=\sum_{l} (\sum_{k} b_{k-l,l}(S,\phi)q^{-ks}q^{ls}) q^{-ls}= \sum_{l} (\sum_{k} b_{k-l,l}(S,\phi)q^{-ks})$$
$$\sum_{k} (\sum_{l} b_{k-l,l}(S,\phi))q^{-ks}= \sum_{k} c_k(S,\phi)q^{-ks},$$ 
for $c_k(S,\phi)$ the finite sum 
$$\sum_{l} b_{k-l,l}(S,\phi)=\int_{|a|=q^{-k}}\int_{b,u,k} S(\begin{pmatrix}a b^{-1} & \\ & 1\end{pmatrix}\begin{pmatrix} 1 & u \\ & 1\end{pmatrix}
\begin{pmatrix} 1 & \\ & b\end{pmatrix}k)\phi(u,b,k)|b|^md^*bdudkd^*a$$
This implies that one has $c_k(S,\phi)=\int_{|a|=q^{-k}}\tilde{S}(a)d^*a=c_k(S)$. The statement of the proposition follows.
\end{proof}

As a consequence, we obtain the injection between spaces of invariant functionals we were looking for.

\begin{prop}\label{mirabolicshalika}
Let $\pi$ be an irreducible representation of $G_n$, the vector space $Hom_{P_n\cap S_n}(\pi,\Theta)$ embeds as a subspace of 
$Hom_{P_n\cap M_{m,m}}(\pi,1)$.
\end{prop}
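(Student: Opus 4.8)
The plan is to follow the Friedberg--Jacquet strategy (\cite{FJ}, \cite{JR}): attach to a Shalika functional a one-parameter family of zeta integrals and read off a linear period from its behaviour at $s=0$. So fix a nonzero $L\in Hom_{P_n\cap S_n}(\pi,\Theta)$. Since $N_{m,m}\subset P_n\cap S_n$ and $\Theta$ restricts on $N_{m,m}$ to the character used throughout, $L$ lies in $Hom_{N_{m,m}}(\pi,\Theta)$; hence for each vector $v$ the map $S_v\colon g\mapsto L(\pi(g)v)$ lies in the image of $\pi$ in $Ind_{N_{m,m}}^{G_n}(\Theta)$, and $\tilde S_v(g)=S_v(diag(g,I_m))$, $c_k(S_v)$ and $I(S_v,s)=\sum_k c_k(S_v)q^{-ks}$ are defined as before. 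By Lemma \ref{utilLM} and Proposition \ref{continuation}, $I(S_v,s)$ converges for $Re(s)\gg0$ and extends to an element of $L(\pi,s+1/2)\,\C[q^{\pm s}]$; informally $I(S_v,s)=\int_{G_m}\tilde S_v(g)|g|^{s}\,d^*g$.

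The first step is a transformation law for $I(S_v,s)$ under $P_n\cap M_{m,m}=\{diag(h,h'):h\in G_m,\ h'\in P_m\}$. For the first factor I would merely change variables: since $\tilde S_{\pi(diag(h,I_m))v}(g)=\tilde S_v(gh)$, right invariance of $d^*g$ gives $I(S_{\pi(diag(h,I_m))v},s)=|h|^{-s}I(S_v,s)$, with no use of the Shalika condition. For the second factor I would use that for $h'\in P_m$ the element $diag(h',h')$ lies in $P_n\cap S_n$ with $\Theta$ trivial on it, so $L\circ\pi(diag(h',h'))=L$; writing $diag(g,h')=diag(h',h')\,diag(h'^{-1}g,I_m)$ this gives $\tilde S_{\pi(diag(I_m,h'))v}(g)=\tilde S_v(h'^{-1}g)$, and left invariance of $d^*g$ yields $I(S_{\pi(diag(I_m,h'))v},s)=|h'|^{s}I(S_v,s)$. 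Combining, $I(S_{\pi(p)v},s)=|h|^{-s}|h'|^{s}\,I(S_v,s)$ for $p=diag(h,h')\in P_n\cap M_{m,m}$, and the character $p\mapsto|h|^{-s}|h'|^{s}$ is trivial at $s=0$. It is exactly the restriction to $P_m$ in the second factor (needed to keep $diag(h',h')$ inside $P_n$) that confines the invariance to $P_n\cap M_{m,m}$ rather than all of $M_{m,m}$.

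Having this, I would define the candidate linear period by normalising and specialising at $s=0$:
$$\Lambda_L(v)=\lim_{s\to 0}\frac{I(S_v,s)}{L(\pi,s+1/2)}.$$
Because $I(S_v,s)\in L(\pi,s+1/2)\,\C[q^{\pm s}]$, the ratio is a Laurent polynomial in $q^{-s}$, hence regular at $s=0$, so $\Lambda_L$ is a well-defined linear form, and it depends linearly on $L$. The transformation law of the previous paragraph together with the vanishing of the exponent at $s=0$ gives $\Lambda_L(\pi(p)v)=\Lambda_L(v)$ for $p\in P_n\cap M_{m,m}$, that is $\Lambda_L\in Hom_{P_n\cap M_{m,m}}(\pi,1)$, and $L\mapsto\Lambda_L$ is $\C$-linear.

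The main obstacle will be injectivity, i.e. $\Lambda_L\neq 0$ whenever $L\neq 0$; equivalently the Laurent polynomials $I(S_v,s)/L(\pi,s+1/2)$ must not all vanish at $q^{-s}=1$. Here I would invoke the Godement--Jacquet theory underlying Proposition \ref{continuation}: after the manipulations of Lemmas \ref{smoothLM} and \ref{lastLM}, the integrals $I(S_v,s)$ are Godement--Jacquet integrals $\int_{G_n}S^U(g)\Phi_*(g)|g|^{s+m}\,d^*g$ for $\pi$, whose span is by definition the fractional ideal generated by $L(\pi,s+1/2)$ (\cite{GJ}). If, as $v$ and the auxiliary data vary, these integrals generate the \emph{whole} ideal $L(\pi,s+1/2)\,\C[q^{\pm s}]$, then the normalised polynomials $I(S_v,s)/L(\pi,s+1/2)$ generate the unit ideal of the principal ideal domain $\C[q^{\pm s}]$, have no common zero in $\C^*$, and in particular cannot all vanish at $q^{-s}=1$, forcing $\Lambda_L\neq0$. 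The delicate point is precisely that the family attached to a \emph{single} nonzero $L$ already exhausts this ideal, rather than a proper subideal whose generator might vanish at $s=0$; I expect this to reduce, via $\tilde S_v(e)=L(v)$ (so that $v\mapsto S_v$ is faithful) combined with the gcd property of the Godement--Jacquet $L$-factor for the irreducible representation $\pi$, but it is the step that needs the most care.
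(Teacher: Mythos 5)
Your setup, your transformation law $I(S_{\pi(p)v},s)=|h|^{-s}|h'|^{s}I(S_v,s)$ for $p=\mathrm{diag}(h,h')\in P_n\cap M_{m,m}$ (which the paper asserts without computation, so spelling it out is welcome), and your idea of specialising at $s=0$ all match the paper's argument. The gap is exactly where you locate it, but your proposed way out is not the one that works, and it leaves two holes. First, before any normalisation you must show that $I(S,s)$ is not identically zero as $S$ ranges over the realisation $S(\pi,L)$; faithfulness of $v\mapsto S_v$ alone does not give this, since each individual coefficient $c_k(S)$ could a priori vanish. The paper proves it by replacing $S$ with its right translates under $\begin{pmatrix} I_m & x\\ & I_m\end{pmatrix}$, $x\in\mathcal{M}_m$: vanishing of all $c_k$ for all these translates says that each compactly supported function $\1_{\{|g|=q^{-k}\}}\tilde S\in\sm_c(\mathcal{M}_m)$ (Lemma \ref{utilLM}) has zero Fourier transform, hence $\tilde S\equiv 0$ on $G_m$, contradicting $\tilde S(I_m)=L(v)$.

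Second, your normalisation by $1/L(\pi,s+1/2)$ followed by evaluation at $s=0$ may well give the zero functional, and the repair you suggest --- proving that the integrals attached to a single $L$ generate the \emph{whole} fractional ideal $L(\pi,s+1/2)\C[q^{\pm s}]$ --- is neither established nor needed; there is no reason these particular Godement--Jacquet integrals (built from the specific coefficients $S^U$ and Schwartz functions $\Phi_*$) should exhaust the ideal. The paper instead chooses $Q\in\C(q^{-s})$ adapted to the family: since the Laurent polynomials $I(S,s)/L(\pi,s+1/2)$ do not all vanish, they have a well-defined minimal order of vanishing $d\geq 0$ at $q^{-s}=1$, and one takes $Q(s)=\bigl(L(\pi,s+1/2)(1-q^{-s})^{d}\bigr)^{-1}$, so that $\Lambda:S\mapsto Q(0)I(S,0)$ is a \emph{nonzero} linear form. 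This extra factor does not disturb equivariance, because the twisting character $|h|^{-s}|h'|^{s}$ is a monomial in $q^{-s}$ equal to $1$ at $s=0$, so the identity $Q(s)I(S_{\pi(p)v},s)=|h|^{-s}|h'|^{s}Q(s)I(S_v,s)$ still specialises to invariance. With these two corrections your argument becomes the paper's proof.
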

\begin{proof}
 Suppose that $L$ is a nonzero element of $Hom_{P_n\cap S_n}(\pi,\Theta)$ (the statement of the proposition being obvious if $Hom_{P_n\cap S_n}(\pi,\Theta)$ 
is reduced to zero). Then, thanks to Froebenius reciprocity, 
$\pi$ can be realised in $Ind_{P_n\cap S_n}^{G_n}(\Theta)$, we denote by $S(\pi,L)$ this realisation. We want to show that the rational map $I(S,s)$ 
defined in Proposition 
\ref{continuation} is nonzero for at least one $S$ in $S(\pi,L)$. Otherwise, this would mean that $c_k(S)$ is zero for all $S$. Replaceing $S$ by 
$\rho \begin{pmatrix} I_n & x \\ & I_n\end{pmatrix} S$ (where $\rho$ is the action by right translation), for $x$ in $\mathcal{M}_m$, 
this means that the integral $\int_{|g|=q-k} \tilde{S}(g) \Theta(xg) dg$ is zero for all $k$ and $x$. This means that the 
map $\1_{\{g\in G_m, |g|=q-k\}}\tilde{S}$, which belongs to $\sm_c(\mathcal{M}_m)$ according to Lemma \ref{utilLM}, has a Fourier transform 
which is null. We conclude that for all $k$,  the map $\1_{\{g\in G_n, |g|=q-k\}}\tilde{S}$ is zero, hence $\tilde{S}$ is zero on $G_m$.
 This is absurd 
as $L$ identifies with $S\mapsto \tilde{S}(I_m)$. In particular, according to Proposition 
\ref{continuation}, there is an element $Q$ in $\C(q^{-s})$, such that $Q(s)I(S,s)$ belongs to $\C[q^{-s}]$ for all $S$ 
in $S(\pi,L)$, and such that $Q(0)I(S,0)$ is nonzero for at least one $S$. The nonzero map $\Lambda:S\mapsto Q(0)I(S,0)$ then belongs to 
$Hom_{P_n\cap M_{m,m}}(\pi,1)$, and $L\mapsto \Lambda$ is the injection of the statement.
\end{proof}

This has as a consequence the functional equation of the square-exterior $L$-function in the even case. 
We will denote by $\pi^\vee$ the contragredient of the representation $\pi$ of $G_{2m}$. If $\pi$ is generic, and 
$W\in W(\pi,\theta)$, we will denote by $\tilde{W}$ the map $g\mapsto W(w_{2m} {}^t\!g^{-1})$, where $w_{2m}$ is the matrix 
corresponding to the permutation $i\mapsto 2m-(i-1)$, hence  $\widetilde{W}\in W(\pi^\vee,\theta^{-1})$. Finally if 
 $\phi$ belongs to $\sm_c(F^m)$, we will denote by $\widehat{\phi}$ its Fourier transform (choosing a measure which is self-dual with respect
 to $\theta$). 

\begin{thm}
 Let $\pi$ be a generic representation of $G_{2m}$. There exists an invertible element $\epsilon(\pi,\wedge^2,s)$ of $\C[q^s,q^{-s}]$, 
such that for every 
 $W$ in $W(\pi,\theta)$, and every $\phi$ in $\sm_c(F^m)$, one has the following functional 
equation: $$\epsilon(\pi,\wedge^2,s,\theta)\frac{J(W,\phi,s)}{L(\pi,\wedge^2,s)}=
\frac{J(\rho(w_{n,n})\widetilde{W},\widehat{\phi},1-s)}{L(\pi^\vee,\wedge^2,1-s)}$$ 
\end{thm}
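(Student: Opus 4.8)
The plan is to follow the Rankin--Selberg scheme: realise both sides of the asserted identity as nonzero elements of one and the same one-dimensional space of twisted Shalika functionals, conclude that they are proportional by a rational factor, and finally identify that factor with $\epsilon(\pi,\wedge^2,s,\theta)$ times the displayed ratio of $L$-functions by using that $L(\pi,\wedge^2,s)$ generates the ideal spanned by the integrals $J(W,\phi,s)$. The rationality inputs come first. By the very definition of $L(\pi,\wedge^2,s)$ recalled above, each normalised integral $J(W,\phi,s)/L(\pi,\wedge^2,s)$ lies in $\C[q^s,q^{-s}]$, and for a suitable $(W,\phi)$ it is nonzero at a prescribed point. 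Since $\widetilde{W}$ ranges over $W(\pi^\vee,\theta^{-1})$ as $W$ ranges over $W(\pi,\theta)$, and $\pi^\vee$ is again generic of $G_{2m}$, the same holds for $J(\rho(w_{n,n})\widetilde{W},\widehat{\phi},1-s)/L(\pi^\vee,\wedge^2,1-s)$ with $s$ replaced by $1-s$. Thus both members of the equation are a priori rational in $q^{-s}$, and the theorem amounts to producing a single invertible Laurent polynomial $\epsilon(\pi,\wedge^2,s,\theta)$ relating them.

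The core is a uniqueness statement. For $s$ outside a discrete set, where the integrals converge or have been continued without pole, I would view $W\otimes\phi\mapsto J(W,\phi,s)$ as a nonzero bilinear form on $W(\pi,\theta)\times\sm_c(F^m)$ which is equivariant under the Shalika group against $\Theta$ and under $G_m$ against $|\cdot|^{s}$, the latter acting simultaneously through the Shalika embedding on $W$ and by translation on $\phi$. The relevant multiplicity-one bound for such forms is exactly what Proposition \ref{mirabolicshalika} supplies, through the injection of $Hom_{P_n\cap S_n}(\pi,\Theta)$ into $Hom_{P_n\cap M_{m,m}}(\pi,\1)$; together with the uniqueness of linear periods this forces the space of twisted Shalika forms to be at most one-dimensional for generic $s$. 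I would then verify that $W\otimes\phi\mapsto J(\rho(w_{n,n})\widetilde{W},\widehat{\phi},1-s)$ satisfies the very same equivariance: the involution $g\mapsto {}^t g^{-1}$ conjugated by $w_{2m}$ carries $W(\pi,\theta)$ onto $W(\pi^\vee,\theta^{-1})$ and interchanges the Shalika data while preserving $\Theta$ up to the expected sign, whereas the Fourier transform $\phi\mapsto\widehat{\phi}$ turns the weight $|\cdot|^{s}$ into $|\cdot|^{1-s}$. This is precisely the mechanism that makes the two integrals comparable.

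Granting uniqueness, the two forms are proportional, so there is a scalar $\gamma(\pi,\wedge^2,s,\theta)$, depending only on $\pi$, $\theta$ and $s$, with $J(\rho(w_{n,n})\widetilde{W},\widehat{\phi},1-s)=\gamma(\pi,\wedge^2,s,\theta)\,J(W,\phi,s)$ for all $W$ and $\phi$; by Bernstein's rationality principle, applied to this family which depends rationally on $q^{-s}$, the factor $\gamma$ lies in $\C(q^{-s})$. Setting $\epsilon(\pi,\wedge^2,s,\theta)=\gamma(\pi,\wedge^2,s,\theta)\,L(\pi,\wedge^2,s)/L(\pi^\vee,\wedge^2,1-s)$ gives the stated equation, and it remains to see that $\epsilon$ is a unit of $\C[q^s,q^{-s}]$. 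This I would deduce from the generator property of the $L$-factors: the proportionality sends the fractional ideal spanned by the $J(W,\phi,s)$ onto that spanned by the $J(\rho(w_{n,n})\widetilde{W},\widehat{\phi},1-s)$, and since $L(\pi,\wedge^2,s)$ and $L(\pi^\vee,\wedge^2,1-s)$ are by definition the generators of these ideals, the rational function $\epsilon$ can have neither a zero nor a pole, hence is invertible.

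The step I expect to be delicate is the uniqueness: pinning down the exact group against which $J(\cdot,\cdot,s)$ is equivariant and bounding its multiplicity by one via Proposition \ref{mirabolicshalika}, and then checking, through the explicit conjugation by $w_{2m}$ together with $g\mapsto {}^t g^{-1}$ and the Fourier behaviour of $\phi$, that the dual integral genuinely lands in the same one-dimensional space. Once that is secured, the rationality of $\gamma$ and the invertibility of $\epsilon$ are formal consequences of the generator property of the exterior-square $L$-factor.
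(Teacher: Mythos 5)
Your proposal follows essentially the same route as the paper's proof: bound the space of twisted Shalika functionals on $\pi\otimes\sm_c(F^m)$ by one via Proposition \ref{mirabolicshalika}, deduce that the two families of integrals are proportional by an element of $\C(q^{-s})$, and conclude that this factor times the ratio of $L$-factors is a unit from the generator property of the exterior-square $L$-factor (the paper cites Proposition 3.2 of \cite{MY}, which produces $(W,\phi)$ with $J(W,\phi,s)=1$). The one correction: the required bound $\dim Hom_{P_{2m}\cap M_{m,m}}(\pi,|.|^{s})\leq 1$ for all but finitely many $q^{-s}$ does \emph{not} follow from Jacquet--Rallis uniqueness of linear periods (which concerns the full Levi $M_{m,m}$ and irreducible representations, not the mirabolic intersection); the paper instead derives it from the Bernstein--Zelevinsky filtration of $\pi_{|P_{2m}}$ via Corollaries \ref{homzero} and \ref{homsteak}, combined with the reduction $Hom_{S_{2m}}(\pi\otimes\sm_{c,0}(F^m),|.|^{-s}\Theta)\simeq Hom_{S_{2m}\cap P_{2m}}(\pi,|.|^{-s+1}\Theta)$ coming from the $S_{2m}$-orbit structure of $F^m$ and the central character, a step your sketch elides.
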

\begin{proof}
In the following we put $n=2m$. We first prove that $dim(Hom_{P_{2m}\cap H_{m,m}}(\pi,|.|^s))\leq 1$ for all values of $q^{-s}$, 
except a finite number. Indeed, $\pi$ has a filtration with each factor of the form $(\Phi^+)^{n-k-1} \Psi^+(\tau)$ according to Proposition 
\ref{filtr}, for $k\leq n-1$, and 
where $\tau$ is a representation of $G_k$ admitting a central character. For every irreducible representation $\tau$ of 
$G_k$, for $k\geq 1$, we deduce that $Hom_{P_{2m}\cap H_{m,m}}((\Phi^+)^{n-k-1} \Psi^+(\tau),|.|^s))$ is zero except for a finite number of
 $q^{-s}$, as a consequence of Corollary \ref{homsteak} and the fact that 
$\tau$ has a central character. For all other values of $q^{-s}$, we deduce that 
$dim(Hom_{P_{2m}\cap H_{m,m}}(\pi,|.|^s))\leq dim(Hom_{P_{2m}\cap H_{m,m}}((\Phi^+)^{n-1} \Psi^+(\1),|.|^s))$ which is less than or equal to $1$ 
according to Corollary \ref{homsteak} again. Hence this proves our assertion 
about $dim(Hom_{P_{2m}\cap H_{m,m}}(\pi,|.|^s))$. Then Proposition \ref{mirabolicshalika} implies that 
$dim(Hom_{P_{2m}\cap S_{2m}}(\pi,|.|^{s}\Theta))\leq 1$ for all values of $q^{-s}$ except a finite number, with $|.|^s\Theta\begin{pmatrix}
p & x \\ &p\end{pmatrix}=|p|^{2s}\Theta(p^{-1}x)$. The Shalika subgroup $S_{2m}$ acts by $\begin{pmatrix}
g & x \\ &g\end{pmatrix}\phi(t)=\phi(tg)$ on $\sm_c(F^m)$, and it is easily verified that $B_s:(W,\phi)\mapsto \frac{J(W,\phi,s)}{L(\pi,\wedge^2,s)}$ 
and 
$\widetilde{B}_s:(W,\phi)\mapsto \frac{J(\rho(w_{n,n})\widetilde{W},\widehat{\phi},1-s)}{L(\pi^\vee,\wedge^2,1-s)}$ correspond to elements of 
$Hom_{S_{2m}}(\pi \otimes \sm_c(F^m),|.|^{-s}\Theta)$. Because of $\pi$'s central character, the space $Hom_{S_{2m}}(\pi,|.|^{-s}\Theta)$ is zero
 except 
for for a finite number of values of $q^{-s}$, hence the space $Hom_{S_{2m}}(\pi \otimes \sm_c(F^m),|.|^{-s}\Theta)$ is equal to
 $Hom_{S_{2m}}(\pi \otimes \sm_{c,0}(F^m),|.|^{-s}\Theta)$ except for those values ($\sm_{c,0}(F^m)$ is the subspace of $\sm_{c}(F^m)$ consisting 
of functions vanishing at zero). But 
$$Hom_{S_{2m}}(\pi \otimes \sm_{c,0}(F^m),|.|^{-s}\Theta)\simeq Hom_{S_{2m}}(\pi \otimes ind_{S_{2m}\cap P_{2m}}^{S_{2m}}(1),|.|^{-s}\Theta)$$
$$\simeq Hom_{S_{2m}}(\pi, Ind_{S_{2m}\cap P_{2m}}^{S_{2m}}(|.|^{-s+1}\Theta))\simeq Hom_{S_{2m}\cap P_{2m}}(\pi, |.|^{-s+1}\Theta)),$$
the first isomorhism identifying $S_{2m}\cap P_{2m}\backslash S_{2m}$ with $F^m-\{0\}$, and the last by Frobenius reciprocity law. As a 
consequence, we deduce that except for a finite number of $q^{-s}$, the dimension of $Hom_{S_{2m}}(\pi \otimes \sm_c(F^m),|.|^{-s}\Theta)$ 
is less or equal to $1$. This implies that 
$B_s$ and $\widetilde{B}_s$ are proportional up to an element of $\C(q^{-s})$. Taking $W_i$'s and $\phi_i$'s such that 
\[\sum_i \frac{J(W_i,\phi_i,s)}{L(\pi,\wedge^2,s)}=1,\] we see that $\epsilon(\pi,\wedge^2,s,\theta)$ belongs to 
$\C[q^{\pm s}]$, and taking $W_i$'s and $\phi_i$'s such that 
\[\sum_i \frac{J(\rho(w_{n,n})\widetilde{W_i},\widehat{\phi_i},1-s)}{L(\pi^\vee,\wedge^2,1-s)}=1,\] 
we see that it is a unit in this ring.
\end{proof}

\section{Relation between Shalika and local models}

 As we said before, any irreducible representation of $G_{2m}$ admitting a Shalika model, admits a local model.
The converse is true at least for cuspidal representations, it is proved in \cite{JNQ}, amongst many other properties of such representations. However, in this case, one can give a very short proof, which extends to a 
larger class of representations.\\
 We will say that an irreducible $M_{m,m}$-distinguished representation $\pi$ of $G_{2m}$ (with invariant linear form $L$) is relatively integrable (resp. relatively square-integrable) if there is (or equivalently for any) $v$ in the space of $\pi$, the relative coefficient  
$g\mapsto L(\pi(g)v)$ belongs to $L^1(M_{m,m}\backslash G_{2m})$ (resp. $L^2(M_{m,m}\backslash G_{2m})$). Cuspidal $M_{m,m}$-distinguished representations are relatively cuspidal (i.e., their relative coefficients belong to $\sm_c(M_{m,m}\backslash G_{2m})$), according to \cite{KT1}, and $M_{m,m}$-distinguished discrete series are relatively $L^2$ according to \cite{KT2}. The following theorem is a consequence of the principle of unfolding, explained in \cite{S}.

\begin{thm}\cite{S}\label{local-shalika}
 Let $\pi$ be a relatively integrable, or relatively square-integrable $M_{m,m}$-distinguished representation of $G_{2m}$, it admits a Shalika model. 
\end{thm}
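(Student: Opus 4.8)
The plan is to prove that a relatively integrable (or relatively square-integrable) $M_{m,m}$-distinguished representation $\pi$ of $G_{2m}$ admits a Shalika model, by unfolding the linear period against the torus direction inside the Shalika group. Let $L$ be a nonzero linear period, i.e.\ a nonzero element of $Hom_{M_{m,m}}(\pi,1)$. The idea is to build a Shalika functional out of $L$ by averaging the relative coefficient $g\mapsto L(\pi(g)v)$ against the character $\Theta$ along the unipotent radical $N_{m,m}$ of the Shalika group. More precisely, I would consider, for $v$ in the space of $\pi$, the formal integral
\begin{equation*}
\ell(v)=\int_{N_{m,m}}L\!\left(\pi\!\begin{pmatrix} I_m & X \\ & I_m\end{pmatrix} v\right)\Theta^{-1}\!\begin{pmatrix} I_m & X \\ & I_m\end{pmatrix}dX,
\end{equation*}
which, if convergent and nonzero, would manifestly transform under $S_{2m}=G_m\ltimes N_{m,m}$ by the character $\Theta$: the $N_{m,m}$-part is handled by the change of variable in $X$, and the diagonal $G_m$-part (embedded as $\mathrm{diag}(g,g)$) is absorbed because $\mathrm{diag}(g,g)$ lies in $M_{m,m}$, so $L$ is invariant under it, while the measure $dX$ transforms correctly. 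Thus $\ell\in Hom_{S_{2m}}(\pi,\Theta)$, a Shalika functional.

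The serious work is therefore twofold: first, to make sense of the integral (establish convergence), and second, to show the resulting functional is \emph{nonzero}. This is exactly where the integrability hypothesis enters. The relative coefficient $g\mapsto L(\pi(g)v)$ is by assumption in $L^1(M_{m,m}\backslash G_{2m})$ (or $L^2$), and the point of the unfolding principle of \cite{S} is that integration over $N_{m,m}$ realises the embedding of $N_{m,m}$ into a set of representatives for $M_{m,m}\backslash G_{2m}$: the map $X\mapsto M_{m,m}\begin{pmatrix} I_m & X \\ & I_m\end{pmatrix}$ is injective modulo $M_{m,m}$, so the $N_{m,m}$-integral is dominated by the full integral over $M_{m,m}\backslash G_{2m}$. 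Convergence then follows from the $L^1$ hypothesis (for the $L^2$ case one combines with the behaviour of $\Theta$ and Cauchy--Schwarz, or more cleanly invokes that the $L^2$ relative coefficients lie in a space to which the unfolding still applies). The key conceptual input is that unfolding identifies the Shalika period, when written as a period integral over $S_{2m}\backslash G_{2m}$, with the linear period integral over $M_{m,m}\backslash G_{2m}$, up to the inner $N_{m,m}$-integration that produces the character $\Theta$; this is the content of Theorem \ref{local-shalika} as attributed to Sakellaridis--Venkatesh.

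The main obstacle I anticipate is non-vanishing: one must rule out that the averaging $\ell$ collapses to zero. The strategy is to argue by Fourier analysis exactly as in the proof of Proposition \ref{mirabolicshalika}, where the analogous non-vanishing was obtained by observing that if all the relevant averaged integrals vanished for every $v$ and every twist by $\begin{pmatrix} I_m & x \\ & I_m\end{pmatrix}$, then a compactly supported function on $\mathcal{M}_m$ would have identically vanishing Fourier transform, forcing the relative coefficient itself to be zero and contradicting $L\neq 0$. Here the role of the twisting variable is played by the $N_{m,m}$-direction against which we integrate, and the self-duality of $\Theta$ on $\mathcal{M}_m$ makes the Fourier-inversion argument go through. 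I would carry out the steps in the order: (i) invoke relative integrability to define $\ell$ and check absolute convergence via unfolding onto $M_{m,m}\backslash G_{2m}$; (ii) verify the $(S_{2m},\Theta)$-equivariance by the change of variables above; (iii) prove non-vanishing by the Fourier-inversion argument modelled on Proposition \ref{mirabolicshalika}. Step (iii) is the delicate one, since it is precisely the injectivity of the Shalika-to-linear assignment run in reverse, and it is where the integrability hypothesis is genuinely used rather than merely assumed.
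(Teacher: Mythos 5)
Your treatment of the relatively \emph{integrable} case is essentially the paper's own proof: one sets $\phi_v(x)=L\bigl(\pi\begin{pmatrix} I_m & x\\ & I_m\end{pmatrix}v\bigr)$, shows $\phi_v\in L^1(\mathcal{M}_m)$ via the Iwasawa decomposition $M_{m,m}\backslash G_{2m}\simeq N_{m,m}K_{2m}$, checks $(S_{2m},\Theta)$-equivariance by the conjugation/change-of-variable computation you indicate, and proves nonvanishing by the Fourier-inversion and Zariski-density argument you describe. One small precision: injectivity of $X\mapsto M_{m,m}\begin{pmatrix} I_m & X\\ & I_m\end{pmatrix}$ does not by itself dominate the $N_{m,m}$-integral by the $L^1$-norm, since $N_{m,m}$ could a priori be a null set in the quotient; the paper thickens $N_{m,m}$ by a compact open subgroup $U$ of $K_{2m}$ fixing the coefficient, so that $\mathrm{Vol}(U)\int_{N_{m,m}}|\phi_v|$ is bounded by the full $L^1$-norm.

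The genuine gap is in the relatively \emph{square-integrable} case. There the pointwise integral $\ell(v)=\int_{\mathcal{M}_m}\phi_v(x)\theta(-Tr(x))\,dx$ need not converge: $N_{m,m}\simeq\mathcal{M}_m$ has infinite measure, so an $L^2$ relative coefficient is not $L^1$ along $N_{m,m}$, and Cauchy--Schwarz against $\Theta$ gives nothing because $|\Theta|\equiv 1$ is not square-integrable on $\mathcal{M}_m$. The paper's mechanism here is different in kind: using Parseval's identity for $\phi^K$ together with the decompositions $M_{m,m}\backslash G_{2m}\simeq N_{m,m}K_{2m}$ and $G_{2m}=S_{2m}G_mK_{2m}$, one shows that the unfolding map $\Phi\mapsto\tilde\Phi(g)=\int_{M_{m,m}\cap S_{2m}\backslash S_{2m}}\Phi(sg)\Theta^{-1}(s)\,ds$, defined initially on $\sm_c(M_{m,m}\backslash G_{2m})$, is an $L^2$-\emph{isometry} onto $L^2(S_{2m}\backslash G_{2m},\theta)$. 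It therefore extends by continuity to all of $L^2(M_{m,m}\backslash G_{2m})$, carries the embedded copy of $\pi$ injectively (so nonvanishing is automatic and no Fourier argument is needed in this half) into $L^{2,\infty}(S_{2m}\backslash G_{2m},\theta)$, and evaluation at the identity yields the Shalika functional. As written, step (i) of your plan fails in the $L^2$ case and must be replaced by this extension-of-an-isometry argument.
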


\begin{proof}
The proof is that of Proposition 4.3 of \cite{S}. We give it here, using simpler tools due to the particular situation. We start with the $L^1$ case.\\ 
Let $L$ be a nonzero $M_{m,m}$-invariant linear form on the space $V$ of $\pi$. The map $\Phi: v \mapsto [g\mapsto L(\pi(g)v)]$ gives an injection of the  
$G_{2m}$-module $\pi$ into $L^1(M_{m,m}\backslash G_{2m})$.\\
 Call $\phi_v$ the function on $\mathcal{M}_m=\mathcal{M}_m(F)$ defined by 
$x \mapsto \Phi(v)\begin{pmatrix} I_m & x \\ 0 & I_m \end{pmatrix}$, it follows from Iwasawa decomposition that the functions $\phi_v$ belongs to 
$L^{1}(\mathcal{M}_m)$. \\
Indeed choose an open compact subgroup $U$ of the maximal compact subgoup $K_{2m}=G_{2m}(\o)$ of $G_{2m}$ leaving $\phi_v$ invariant by right translation, then one has 
$$ + \infty > \int_{M_{m,m}\backslash G_{2m}} |\Phi(v)(g)| dg=  \int_{N_{m,m} \times K_{2m}} |\Phi(v)(nk)| dn dk \geq Vol(U) \int_{N_{m,m}} |\Phi(v)(n)| dn .$$
 
This implies that the integral $\int_{N_{m,m}} |\Phi(v)(n)|dn= \int_{\mathcal{M}_m} |\phi_v(x)|dx$ is finite.\\
Because of the relations $\phi_{\pi \begin{pmatrix} I_m & x_0 \\ 0 & I_m \end{pmatrix} v}(x)= \phi_{v}(x+x_0)$, and $\phi_{\pi \begin{pmatrix} g_1 &  \\   & g_2 \end{pmatrix} v}(x)=\phi_{v}(g_2^{-1}xg_1)$, the map $v\mapsto \int_{\mathcal{M}_m} \phi_v(x)\theta(Tr(-x))dx$ will be a Shalika functional if it is nonzero. 
But if it was zero, replacing $v$ by $\pi \begin{pmatrix} I_m &  \\   & g_2^{-1} \end{pmatrix} v$, one would deduce that for any $v$, the integral $\int_{\mathcal{M}_m} \phi_v(x)\theta(Tr(-g_2x))dx$ is zero, i.e., 
the Fourier transform of $\phi_v$ would be zero on $G_m$, which is a Zariski open subset of $\mathcal{M}_m$. Hence 
$\phi_v$ (which is smooth) would be zero for any $v$, which is not possible as $\phi_v(0)=L(v)$.\\
For the $L^2$-case, according to \cite{S}, the $G_{2m}$-equivariant map 
$$U:\Phi\mapsto \tilde{\Phi}(g)=\int_{M_{2m}\cap S_{2m}\backslash S_{2m}}\Phi(sg)\theta^{-1}(s)ds$$ from 
$\sm_c(M_{2m}\backslash G_{2m})$ to $\sm(S_{2m}\backslash G_{2m},\theta)$ extends to a $L^{2}$ isometry between the 
$L^2(M_{2m}\backslash G_{2m})$ and 
$L^	2(S_{2m}\backslash G_{2m},\theta)$. Indeed, take $\Phi$ in $\sm_c(M_{2m}\backslash G_{2m})$, one has
$$\begin{aligned} \int_{M_{2m}\cap S_{2m}\backslash S_{2m}}\Phi(sg)\Theta^{-1}(s)ds = 
\int_{\mathcal{M}_m}\Phi(\begin{pmatrix} I_m & x \\ & I_m \end{pmatrix}g)\theta(-Tr (x))ds\end{aligned}.$$
Integrating over $M_{2m}\backslash G_{2m}\simeq N_{m,m}K_{2m}$ thanks to Iwasawa decomposition, and denoting $\Phi^K(g)=\int_{K_{2m}}\phi(gk)dk$ and $\phi^K(x)=\Phi^K\begin{pmatrix} I_m & x \\ & I_m \end{pmatrix}$, 
one gets $\begin{aligned}\int_{M_{2m}\backslash G_{2m}}|\Phi(g)|^2dg= \int_{\mathcal{M}_m}|\phi^K(x)|^2dg \end{aligned}$, but according to Parseval identity 
for $\phi^K$, this equals:
$$\begin{array}{l} \int_{\mathcal{M}_m}|\phi^K(x)|^2dg = \int_{\mathcal{M}_m}|\widehat{\Phi^K}(x)|^2dx =\int_{G_m}|\widehat{\phi^K}(x)|^2 dx =
\int_{G_m} \int_{\mathcal{M}_m}|\phi^K(y)\theta(Tr(-xy))dy|^2 dx \\
 =\int_{G_m}\int_{\mathcal{M}_m}|\phi^K(yx^{-1})\theta(Tr(-y))dy|^2 |x|^{-2m}dx =
 \int_{G_m}\int_{\mathcal{M}_m}|\phi^K(yx^{-1})\theta(Tr(-y))dy|^2 |x|^{-m}d^*x \\
 = \int_{G_m}\int_{\mathcal{M}_m}|\Phi^K(\begin{pmatrix} I_m & y \\ & I_m \end{pmatrix} \begin{pmatrix}x & \\ & I_m\end{pmatrix})\theta(Tr(-y))dy|^2 |x|^{-m}d^*x\\ = \int_{S_{2m}\backslash G_{2m}}\int_{\mathcal{M}_m}|\Phi(\begin{pmatrix} I_m & y \\ & I_m \end{pmatrix}g)\theta(Tr(-y))dy|^2d^*g=\int_{S_{2m}\backslash G_{2m}}|\tilde{\Phi}(g)|^2d^*g   \end{array}$$
The first equality on the last line is thanks to Iwasawa decomposition $G_{2m}=S_{2m}G_{m}K_{2m}$ again. But if $\D$ is relatively square integrable, then it is a submodule of the smooth part $L^{2,\infty}(M_{2m}\backslash G_{2m})$ of $L^2(M_{2m}\backslash G_{2m})$, hence $U(\pi)$ is a submodule of $L^{2,\infty}(S_{2m}\backslash G_{2m},\theta)$, and $\Phi\mapsto U(\Phi)(1)$ is a Shalika functional on $\D$.
\end{proof}

By \cite{BD}, representations of the type $\pi_s=\rho|.|^s\times \rho^\vee|.|^{-s}$ are $M_{n,n}$-distinguished for any irreducible representation $\rho$ of $G_n$, and any $s$ in $\C$. Let $\sigma$ be the involution $g\mapsto w_{2m}g w_{2m}^{-1}$ of $G_{2m}$. It can be checked by looking at the exponents of the representation $\pi_s$ on the Jacquet modules corresponding to $\sigma$-parabolic subgroups of $G_{2m}$ (it means the parabolic subgroups $P$ such that $\sigma(P)$ is opposite to $P$, see \cite{KT1} for this notion), that $\pi_s$ is relatively integrable 
for $Re(s)$ sufficiently large. Hence we see that the class of relatively integrable representations (to which our theorem applies) of $G_{2m}$ strictly contains the class of cuspidal representations of $G_{2m}$.\\

Of course, it is false that in general, irreducible $M_{m,m}$-distinguished representations of $G_{2m}$ 
have a Shalika model (for example the trivial representation of $G_2$). However, all infinite dimensional 
representations of $G_2$ (i.e., the generic representations) admit a local and a Shalika model at the same time, i.e., when their central 
character is trivial. It is indeed classical that a generic representation with trivial character of $G_2$ is $M_{1,1}$-distinguished, 
and in this case, a Shalika functional is nothing else than a Whittaker functional. Actually, in general, 
genric representations of $G_{2m}$ with a local model admit a Shalika model, according to a not yet 
published result of Wee Teck Gan. We briefly sketch the argument that he has communicated to us here, the result being part 
of his forthcoming paper \cite{G}. It is a consequence of the theta correspondance for the pair $(G_{2m},G_{2m})$, which 
gives an isomorphism between the space of linear periods of a representation $\pi$ of $G_{2m}$, and the space of the Shalika 
periods of the Theta lift $\Theta(\pi)$ of $G_{2m}$. But for generic representations, one can show that $\Theta(\pi)$ is 
irreducible, hence isomorphic to its unique irreducible quotient which is $\pi$ according to \cite{Mi}, and the answer follows. 
Actually it even gives a sufficient condition on $\Theta(\pi)$ (namely being irreducible) for the converse 
$``linear \ period \Rightarrow Shalika \ period''$ to be true. The Shalika periods for $G_{4}$ have been characterised in \cite{GT}.\\

\section{Discrete series of $G_{2m}$ with Shalika/local models}

First we recall a result of \cite{K}, characterizing discrete series of $G_{2m}$ admitting a Shalika 
model in terms of poles of the Jacquet-Shalika exterior square $L$-function.

\begin{prop}\label{poledist}{(Theorem 4.3. of \cite{K})}
 Let $n=2m$, and $\D$ be a square-integrable representation of $GL(n,F)$, then it admits a Shalika model if and only if 
$L (\D,\wedge^2,s)$ has a pole at zero.
\end{prop}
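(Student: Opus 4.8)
The plan is to read off both implications from the family of local functionals attached to the Jacquet--Shalika integral, organised by the short exact sequence of $S_{2m}$-modules $0\to \sm_{c,0}(F^m)\to \sm_c(F^m)\to \C\to 0$ given by evaluation at the origin (the quotient carrying the trivial action of $S_{2m}$). Recall from the proof of the functional equation that $B_s:(W,\phi)\mapsto J(W,\phi,s)/L(\D,\wedge^2,s)$ realises a generically one-dimensional space $Hom_{S_{2m}}(\D\otimes \sm_c(F^m),|.|^{-s}\Theta)$, and that applying the contravariant functor $Hom_{S_{2m}}(-,|.|^{-s}\Theta)$ to the exact sequence above yields
$$0\to Hom_{S_{2m}}(\D,|.|^{-s}\Theta)\to Hom_{S_{2m}}(\D\otimes \sm_c(F^m),|.|^{-s}\Theta)\to Hom_{S_{2m}}(\D\otimes \sm_{c,0}(F^m),|.|^{-s}\Theta),$$
in which the first term is, at $s=0$, exactly the space of Shalika functionals on $\D$, and the last term is identified (as in the functional equation proof) with the mirabolic space $Hom_{S_{2m}\cap P_{2m}}(\D,|.|^{1-s}\Theta)$ controlled by Corollary \ref{homsteak} and Proposition \ref{mirabolicshalika}. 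Thus a Shalika functional is precisely the obstruction to the middle functional factoring through the mirabolic quotient, and the content of the proposition is that this obstruction is detected by a pole of $L(\D,\wedge^2,s)$ at $s=0$.

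For the implication ``pole $\Rightarrow$ Shalika model'', since $L(\D,\wedge^2,s)$ generates the ideal spanned by the $J(W,\phi,s)$, a pole at $s=0$ forces some integral $J(W,\phi,s)$ to have a pole there. First I would show that $J(W,\phi,s)$ is holomorphic at $s=0$ whenever $\phi$ lies in $\sm_{c,0}(F^m)$: with $\phi$ vanishing at the origin the factor $\phi(\epsilon_m g)$ cuts off the region of $G_m$ responsible for the divergence, so no pole can arise from the last term of the exact sequence. Consequently the leading Laurent coefficient $(W,\phi)\mapsto \lim_{s\to 0}(1-q^{-s})\,J(W,\phi,s)$ is a nonzero element of $Hom_{S_{2m}}(\D\otimes \sm_c(F^m),\Theta)$ that annihilates $\sm_{c,0}(F^m)$, hence factors through evaluation at the origin and produces a nonzero element of $Hom_{S_{2m}}(\D,\Theta)$, i.e. a Shalika functional.

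For the converse I would start from a nonzero Shalika functional $\Lambda$ and realise $\D$ in its Shalika model. Substituting into $J(W,\phi,s)$ and unfolding the inner integral over $Lie(B_m^{-})$ against $\theta^{-1}(Tr\,X)$, the integral collapses to a Tate-type integral $\int_{G_m}\Lambda(\D(diag(g,I_m))\cdot)\,\phi(\epsilon_m g)\,|g|^{s}\,d^*g$ of the relative Shalika coefficient along the diagonal $G_m$. Since $\D$ is a discrete series, its linear period is relatively square-integrable by \cite{KT2}, so this relative coefficient admits a controlled asymptotic expansion whose leading exponent is the trivial one; integrating $|g|^{s}$ against a coefficient with trivial leading exponent produces a simple pole at $s=0$, with residue a nonzero multiple of $\Lambda$ on the relevant vector, forcing $L(\D,\wedge^2,s)$ to have a pole at $s=0$.

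The main obstacle is the asymptotic bookkeeping in the converse: one must verify that the leading exponent of the relative Shalika (equivalently linear-period) coefficient of a square-integrable $\D$ is exactly $|.|^{0}$, so that the Tate integral develops its pole precisely at $s=0$ rather than at a shifted point, and that no cancellation removes it. This is where relative square-integrability (via \cite{KT2}, together with the unfolding principle of Theorem \ref{local-shalika}) is indispensable, and where the normalisations of the integral $J$ and of the character $|.|^{-s}\Theta$ must be tracked with care.
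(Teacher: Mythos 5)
A preliminary remark: the paper does not prove this proposition at all --- it is imported verbatim as Theorem 4.3 of \cite{K} --- so there is no internal proof to compare against, and your attempt has to stand on its own. Your first direction (pole $\Rightarrow$ Shalika model) does follow the standard route of \cite{JS} and \cite{K}: split $\sm_c(F^m)$ by evaluation at the origin, show that $J(W,\phi,s)$ is holomorphic at $s=0$ when $\phi(0)=0$, and read the residue as a functional on $\D\otimes\sm_c(F^m)$ killing $\sm_{c,0}(F^m)$, hence a Shalika functional. The outline is right, but the one sentence you offer for the holomorphy (``$\phi(\epsilon_m g)$ cuts off the region responsible for the divergence'') is the entire content of that step: one needs the asymptotic expansion of Whittaker functions on the torus, together with the strict positivity of the exponents of a square-integrable representation, to see that the only obstruction to convergence at $s=0$ sits in the central direction and that the resulting pole is simple --- you assume simplicity silently when you write the leading coefficient as $\lim_{s\to 0}(1-q^{-s})J(W,\phi,s)$.

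The converse is where the theorem actually lives, and there your argument has a genuine gap. You ``substitute the Shalika model into $J(W,\phi,s)$ and unfold,'' but $J$ is defined on the Whittaker model, and its inner integration runs only over the lower-triangular block $Lie(B_m^{-})$ of the unipotent radical $N_{m,m}$; the identification of this partially integrated Whittaker function with the abstract Shalika coefficient $g\mapsto \Lambda(\D(diag(g,I_m))v)$ is not a formal unfolding (that manipulation is legitimate in the global setting, not locally). Locally it is precisely the assertion that the integral $W\mapsto \int\int W(\cdots)\theta^{-1}(Tr\,X)\,dX\,dg$ realises a \emph{nonzero} multiple of the given Shalika functional on the Whittaker model --- the non-vanishing theorem that occupies the bulk of \cite{K}. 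Without it, nothing rules out that the residue functional produced in your first direction is identically zero even though $Hom_{S_{2m}}(\D,\Theta)\neq 0$, which is exactly the scenario the proposition excludes. The appeal to \cite{KT2} does not close this: that reference controls the $M_{m,m}$-relative matrix coefficients on $M_{m,m}\backslash G_{2m}$, not the restriction of the Shalika coefficient to $diag(G_m,I_m)$, and in any case integrability and exponent estimates govern \emph{where} a pole could occur, not whether its coefficient survives the integration over $W$ and $\phi$. You flag the ``asymptotic bookkeeping'' as the main obstacle, but the deeper missing ingredient is this non-vanishing statement, without which the implication ``Shalika model $\Rightarrow$ pole at $0$'' is asserted rather than proved.
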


Now we express the exterior-square $L$ function of the Langlands parameter of $\D=St_k(\rho)$ of $G_{2m}$ in terms of the exterior and symmetric-square $L$-function of the Langlands parameter of the cuspidal representation 
$\rho$ of $G_r$ (with $kr=2m$).

\begin{prop}\label{Lgalois}
Let $\phi(\D)$ and $\phi(\rho)$ be the Langlands parameters of $\D=St_k(\rho)$ and $\rho$ respectively. One has the equality
 $$L(\phi(\D),s)=\prod_{i=0}^{[(k-1)/2]}L(\wedge^2,\phi(\rho),s+k-2i-1)\prod_{j=0}^{[k/2-1]}L(Sym^2,\phi(\rho),s+k-2j-2).$$
\end{prop}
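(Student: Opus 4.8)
The plan is to transport the whole computation to the Galois side and reduce it to a plethysm identity. Recall that under the local Langlands correspondence the parameter of the essentially square-integrable representation $\D=St_k(\rho)$ is $\phi(\D)=\phi(\rho)\otimes Sp(k)$, where $\phi(\rho)$ is the $r$-dimensional \emph{irreducible} representation of $W_F$ attached to the cuspidal $\rho$, and $Sp(k)$ is the $k$-dimensional irreducible representation of the $SL(2,\C)$-factor of $W'_F$. By the definition recalled in the preliminaries the left-hand side is the exterior-square $L$-function $L(\wedge^2(\phi(\D)),s)$, so everything comes down to decomposing $\wedge^2(\phi(\rho)\otimes Sp(k))$ into irreducible constituents and reading off the $L$-factors.

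First I would apply the standard decomposition of the exterior square of a tensor product,
$$\wedge^2(A\otimes B)\simeq (\wedge^2 A\otimes Sym^2 B)\oplus (Sym^2 A\otimes \wedge^2 B),$$
with $A=\phi(\rho)$ and $B=Sp(k)$. Since $\phi(\rho)$ acts through $W_F$ and $Sp(k)$ through $SL(2,\C)$, and these factors commute, each summand is again an external tensor of a $W_F$-representation with an $SL(2,\C)$-representation. Next I would decompose the two $SL(2,\C)$-factors by the classical Clebsch--Gordan rules for symmetric and exterior squares of irreducibles: with $Sp(k)$ the $k$-dimensional irreducible representation,
$$Sym^2 Sp(k)\simeq \bigoplus_{i=0}^{[(k-1)/2]} Sp(2k-1-4i),\qquad \wedge^2 Sp(k)\simeq \bigoplus_{j=0}^{[k/2-1]} Sp(2k-3-4j),$$
the index ranges being precisely those keeping the dimensions $2k-1-4i$ and $2k-3-4j$ positive.

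Combining the two steps expresses $\wedge^2(\phi(\D))$ as a direct sum of representations of the form $(\wedge^2\phi(\rho))\otimes Sp(2k-1-4i)$ and $(Sym^2\phi(\rho))\otimes Sp(2k-3-4j)$. To conclude I would use multiplicativity of the $L$-factor over direct sums, $L(\tau\oplus\tau',s)=L(\tau,s)L(\tau',s)$, together with the twisting formula furnished by definition (31.3.31) of \cite{BH}, namely $L(A\times Sp(l),s)=L(A,s+(l-1)/2)$ for any representation $A$ of $W_F$; by additivity in $A$ this reduces to the case of a single $W_F$-irreducible, where it is the computation of the $L$-factor of a special Weil--Deligne representation, the unramified shift $(l-1)/2$ arising from the monodromy-kernel (lowest weight) line of $Sp(l)$. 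Taking $l=2k-1-4i$ gives the shift $(l-1)/2=k-2i-1$ on the exterior-square factors and $l=2k-3-4j$ gives $(l-1)/2=k-2j-2$ on the symmetric-square factors, and the resulting index ranges are exactly those of the statement, so the product over all constituents yields the claimed identity.

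The step requiring the most care is the third one: correctly pinning down the two $SL(2,\C)$-decompositions above — in particular which of $Sym^2$ and $\wedge^2$ carries the odd- versus even-dimensional constituents — and then matching the $SL(2,\C)$-twist to the spectral shift in the Weil-group $L$-factor with the correct sign under the normalisation of \cite{BH}. Once the shift $(l-1)/2$ is fixed, the bookkeeping of the indices $k-2i-1$ and $k-2j-2$ and the boundary cases of the two products is automatic, as one verifies directly for small $k$ (for instance $k=1$ recovers $L(\wedge^2\phi(\rho),s)$, and $k=2$ gives $L(\wedge^2\phi(\rho),s+1)\,L(Sym^2\phi(\rho),s)$).
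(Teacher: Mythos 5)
Your proposal is correct and follows essentially the same route as the paper: write $\phi(\D)=\phi(\rho)\otimes Sp(k)$, apply $\wedge^2(A\otimes B)\simeq(\wedge^2A\otimes Sym^2B)\oplus(Sym^2A\otimes\wedge^2B)$, decompose $Sym^2Sp(k)$ and $\wedge^2Sp(k)$ by Clebsch--Gordan with exactly the stated index ranges, and take $L$-factors. Your explicit justification of the shift $L(A\times Sp(l),s)=L(A,s+(l-1)/2)$ is a detail the paper leaves implicit, but it is the same argument.
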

\begin{proof}
 We recall that we denote by $\phi(\rho)$ the $r$-dimensional representation of the Weil group $W_F$ of $F$, which is the Galois parameter of $\rho$, and by $Sp(k)$ the (up to isomorphism) unique algebraic $k$-dimensional irreducible representation of $SL(2,\C)$. In this case, the Galois parameter $\phi(\D)$ of $St_k(\rho)$ is the representation 
$\phi(\rho)\otimes Sp(k)$ of $W'_F$. But it is then an exercise to see that the representation 
$\wedge^2(\phi(\rho))\otimes Sp(k))$ is isomorphic to 
$$\wedge^2(\phi(\rho))\otimes Sym^2( Sp(k)) \bigoplus Sym^2(\phi(\rho))\otimes \wedge^2( Sp(k)).$$
Moreover one shows that $Sym^2(Sp(k))\simeq \oplus_{i=0}^{[(k-1)/2]} Sp(2k-1-4i)$ and 
$\wedge^2( Sp(k))\simeq \oplus_{i=0}^{[k/2-1]} Sp(2k-3-4i)$.  Taking $L$-functions, this gives the formula: 
$$L(\phi(\D),s)=\prod_{i=0}^{[(k-1)/2]}L(\wedge^2,\phi(\rho),s+k-2i-1)\prod_{j=0}^{[k/2-1]}L(Sym^2,\phi(\rho),s+k-2j-2).$$

\end{proof}

Finally using Corollary 1.4 of \cite{KR}, we obtain the following theorem.

\begin{thm}\label{shaldiscr}
Let $\D$ be the representation $St_k(\rho)$ of $G_{2m}$.\\
If $k$ is odd, $\D$ has a Shalika (or a local) model if and only if $L(\rho,\wedge^2,s)$ has a pole at zero, or equivalently if and only if $\rho$ has Shalika (or a local) model.\\
If $k$ is even, $\D$ has a Shalika (or a local) model if and only if $L(\phi(\rho),Sym^2,s)$ has a pole at zero (where $\phi(\rho)$ is the Langlands parameter of $\rho$).
\end{thm}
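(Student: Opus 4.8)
The plan is to translate the existence of a Shalika model for the discrete series $\D=St_k(\rho)$ into a statement about poles of $L$-functions, and then to read off the answer from the factorisation of Proposition \ref{Lgalois}. First I would dispose of the parenthetical ``(or a local)'': for a discrete series of $G_{2m}$ the existence of a Shalika model is equivalent to that of a local model. Indeed, \cite{JR} gives the implication Shalika $\Rightarrow$ $M_{m,m}$-distinguished, while conversely an $M_{m,m}$-distinguished discrete series is relatively square-integrable by \cite{KT2}, so Theorem \ref{local-shalika} produces a Shalika model. Thus throughout I may work with Shalika models only. Next, by Proposition \ref{poledist}, $\D$ has a Shalika model if and only if the Jacquet--Shalika $L$-function $L(\D,\wedge^2,s)$ has a pole at $s=0$. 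Here I would invoke Corollary 1.4 of \cite{KR}, which identifies $L(\D,\wedge^2,s)$ with the Artin exterior-square $L$-function of the Langlands parameter $\phi(\D)=\phi(\rho)\otimes Sp(k)$, and then substitute the product formula of Proposition \ref{Lgalois}:
$$L(\phi(\D),s)=\prod_{i=0}^{[(k-1)/2]}L(\wedge^2,\phi(\rho),s+k-2i-1)\prod_{j=0}^{[k/2-1]}L(Sym^2,\phi(\rho),s+k-2j-2).$$
The whole problem then becomes: which of these shifted factors can contribute a pole at $s=0$?

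The key point, which I expect to be the heart of the argument, is that $\rho$ is unitary cuspidal, so its parameter $\phi(\rho)$ is a bounded (tempered) representation of $W_F$; hence $\wedge^2\phi(\rho)$ and $Sym^2\phi(\rho)$ are direct sums of unitary irreducibles of $W_F$, and the Artin factor of each such summand is either $1$ or of the form $(1-\alpha q^{-s})^{-1}$ with $|\alpha|=1$. Such a factor is holomorphic and nowhere vanishing off the line $\mathrm{Re}(s)=0$, hence finite and nonzero at every nonzero real (in particular every nonzero integer) value of its argument. Consequently a factor $L(\wedge^2,\phi(\rho),s+k-2i-1)$ or $L(Sym^2,\phi(\rho),s+k-2j-2)$ can have a pole at $s=0$ only when its integer shift vanishes. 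A short parity computation shows that $k-2i-1$ runs through the even values $0,2,\dots,k-1$ when $k$ is odd and through the positive odd values $1,3,\dots,k-1$ when $k$ is even, while $k-2j-2$ runs through the even values $0,2,\dots,k-2$ when $k$ is even and through the positive odd values when $k$ is odd. Thus exactly one shift vanishes: it occurs in the first product, at $i=(k-1)/2$ giving the factor $L(\wedge^2,\phi(\rho),s)$, when $k$ is odd, and in the second product, at $j=k/2-1$ giving $L(Sym^2,\phi(\rho),s)$, when $k$ is even. Since no factor ever vanishes, there is no possible cancellation, so the pole order of the product at $s=0$ equals that of this single unshifted factor.

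It then only remains to assemble the conclusion. When $k$ is even, $L(\phi(\D),s)$ has a pole at $s=0$ if and only if $L(Sym^2,\phi(\rho),s)$ does, which is the stated condition. When $k$ is odd, $r=2m/k$ is necessarily even, so $\rho$ is a discrete series of $G_r$ with $r$ even; the pole at $s=0$ occurs if and only if $L(\wedge^2,\phi(\rho),s)$ has one, and using \cite{KR} once more to identify this Artin factor with the Jacquet--Shalika factor $L(\rho,\wedge^2,s)$, together with Proposition \ref{poledist} applied to $\rho$ and the discrete-series equivalence of Shalika and local models noted above, this is exactly the condition that $\rho$ admit a Shalika, equivalently a local, model. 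The main obstacle in making this rigorous is the temperedness/unitarity input that rules out poles of the shifted factors at $s=0$ and forbids cancellation; everything else is the bookkeeping of parities and of the single unshifted place.
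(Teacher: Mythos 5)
Your proposal is correct and follows essentially the same route as the paper: Proposition \ref{poledist} plus Corollary 1.4 of \cite{KR} to reduce to a pole of $L(\phi(\D),\wedge^2,s)$ at $s=0$, then the factorisation of Proposition \ref{Lgalois} together with the unitarity of $\rho$ (hence boundedness of $\phi(\rho)$) to isolate the single unshifted factor. You merely make explicit the parity bookkeeping, the non-cancellation argument, and the Shalika/local equivalence for discrete series that the paper's proof leaves implicit.
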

 \begin{proof}
  According to Corollary 1.4 of \cite{KR}, the Rankin-Selberg exterior-square $L$-function of $\D$ coincides with the exterior-square $L$-function of its Langlands parameter. But we know from Proposition \ref{poledist}, that $\D$ has a Shalika model if and only if $L(\Delta,\wedge^2,s)$, hence $L(\phi(\Delta),\wedge^2,s)$, has a pole at zero. But because the central character of $\rho$ is unitary, we see that $L(\phi(\Delta),\wedge^2,s)$ has a pole at zero if and only if $k$ is odd and $L(\phi(\rho),\wedge^2,s)$ has a pole at zero, or $k$ is even, and 
$L(\phi(\rho),Sym^2,s)$ has a pole at zero. The statement of the theorem follows (keeping Corollary 1.4 of \cite{KR} in mind).
 \end{proof}

\end{document}